\newtheorem{theorem}{Theorem}
\newtheorem{corollary}{Corollary}
\newtheorem{proposition}{Proposition}
\newtheorem{lemma}{Lemma}
\newtheorem{remark}{Remark}
\newtheorem{example}{Example}
\newenvironment{definition}
{\smallskip\noindent{\bf Definition\/}:}{\smallskip\par}
\newenvironment{remarks}
{\smallskip\noindent{\bf Remarks\/}.}{\smallskip\par}
\newenvironment{proof}{\begin{ProofwCaption}{Proof}}{\end{ProofwCaption}}
\newenvironment{proof*}[1]{\begin{ProofwCaption}{{#1}}}{\end{ProofwCaption}}
\newenvironment{ProofwCaption}[1]%
  {\addvspace\theorempreskipamount \noindent{\it #1.}\rm}%
  {\qed \par \addvspace\theorempostskipamount}
\newcommand{\qedsymbol}{{\rm $\Box$}}
\newcommand{\qed}{\hfill\qedsymbol}
\newcommand{\e}{\widehat{e}}
\newcommand{\CC}{{\mathbb C}}
\newcommand{\QQ}{{\mathbb Q}}
\newcommand{\RR}{{\mathbb R}}
\newcommand{\ZZ}{{\mathbb Z}}
\newcommand{\calH}{{\mathcal H}}
\title{Orbifold Milnor lattice and orbifold intersection form}
\author{Wolfgang Ebeling and Sabir M.~Gusein-Zade
\thanks{Partially supported by DFG (Eb 102/8-1). The work of the second author
(Sections~\ref{sect:qhg}, \ref{sect:rings}, \ref{sect:Seifert} and~\ref{sect:inter}) was supported
by the grant 16-11-10018 of the Russian Science Foundation.
Keywords: group action, quantum cohomology, singularities, orbifold, Milnor lattice, Seifert form, invertible polynomial.
AMS 2010 Math. Subject Classification: 14R20, 57R18, 32S05, 58K65, 58K70.
}
}
\date{}
\begin{document}
\selectlanguage{english}

\maketitle

\begin{abstract}
For a germ of a quasihomogeneous function with an isolated critical point at the origin invariant with respect to an appropriate action of a finite abelian group, H.~Fan, T.~Jarvis, and Y.~Ruan defined the so-called quantum cohomology group. It is considered as the main object of the quantum singularity theory (FJRW-theory). 
We define orbifold versions of the monodromy operator on the quantum (co)homology group, of the Milnor lattice, of the Seifert form and of the intersection form. We also describe some symmetry properties of invariants of invertible polynomials refining the known ones.
\end{abstract}

%%%%%%%%%%%%%%%%%%%%%%%
\section{Introduction} \label{sect:Intro}
%%%%%%%%%%%%%%%%%%%%%%%
In \cite{Ruan_etal}, for a germ $f$ of a quasihomogeneous function with an isolated critical point at
the origin invariant with respect to an appropriate action of a finite abelian group $G$ (an admissible one),
H.~Fan, T.~Jarvis, and Y.~Ruan defined the so-called quantum cohomology group $\calH_{f,G}$. This group
is related to the vanishing cohomology groups of Milnor fibres of restrictions of $f$ to fixed point sets
of elements of $G$. The quantum cohomology group is considered as the main object of the quantum singularity
theory (FJRW-theory). In \cite{Ruan_etal}, the authors study some structures on it which generalize similar
structures in the usual singularity theory.

An important role in singularity theory is played by such concepts as the (integral) Milnor lattice,
the monodromy operator, the Seifert form and the intersection form. Analogues of these concepts have not
yet been considered in the FJRW-theory.

Here we define an orbifold version of the monodromy operator on the quantum (co)homology group
$\calH_{f,G}$ and a lattice $\Lambda_{f,G}$ in $\calH_{f,G}$ which is invariant with respect to
the orbifold monodromy operator and is considered as an orbifold version of the Milnor lattice.
The action of the orbifold monodromy operator on it can be considered as an analogue of the integral
monodromy operator. Moreover, we define orbifold versions of the Seifert form and of the intersection form.
We show that they are related by equations similar to those in the classical case.

To define these concepts we introduce the language of group rings. An appropriate change of the basis in
the group ring allows to give a decomposition of a certain extension of the quantum (co)homology group
$\calH_{f,G}$ into parts isomorphic to (co)homology groups of certain suspensions of the restrictions of
the function $f$ to fixed point sets. This permits to define analogues of the Seifert and intersection form
on this extension. We show that the intersection of this decomposition with the quantum (co)homology group
respects the relations between the monodromy, the Seifert and the intersection form. This defines these
concepts on the quantum (co)homology group.

In the last section, we shall consider some examples. These examples are chosen in the class of  so-called invertible polynomials (orbifold
Landau--Ginzburg models in the terminology of \cite{BH1,BH2}). We also consider the Berglund--H\"ubsch--Henningson duality of pairs $(f,G)$ where $f$ is an invertible polynomial and discuss the behaviour of the Milnor lattice under this mirror symmetry.

The authors are grateful to A.~Takahashi for very useful discussions permitting them to understand some
peculiarities in the definitions which they initially missed. We would like to thank the referee for carefully reading our paper and for valuable comments which helped to improve the paper.

%%%%%%%%%%%%%%%%%%%%%%
\section{Quantum cohomology group} \label{sect:qhg}
%%%%%%%%%%%%%%%%%%%%%%%
Let $f:(\CC^n,0)\to(\CC,0)$ be a germ of a holomorphic function with an isolated critical point at
the origin and let $G$ be a finite abelian group acting faithfully on $(\CC^n,0)$ and preserving $f$.
(Without loss of generality one can assume that the action of $G$ is linear and diagonal.)

An important example is an invertible polynomial $f$ in $n$ variables with a subgroup $G$
of its maximal diagonal symmetry group $G_f$: \cite{BH2}. To a pair $(f,G)$ of this sort, 
P.~Berglund, T.~H\"ubsch, and M.~Henningson \cite{BH1,BH2} defined a dual pair $(\widetilde{f}, \widetilde{G})$. (Another description of the dual group was given by Krawitz \cite{Krawitz}.) This construction plays an
important role in mirror symmetry.

For a subgroup $K\subset G$, let $(\CC^n)^K$ be the fixed point set 
$\{x\in \CC^n \, \vert \,  \forall g\in K: gx=x\}$ and let $n_K$ be the dimension of $(\CC^n)^K$. The restriction $f_{\vert (\CC^n)^K}$ will be denoted by $f^K$. 
If $K$ is the cyclic subgroup $\langle g\rangle$ generated by an element $g\in G$,
we shall use the notations $(\CC^n)^g$, $n_g$ and $f^g$.

The Milnor fibre $V_f$ of the germ $f$ is
$f^{-1}(\varepsilon)\cap B_{\delta}^{2n}$ where $0<\vert\varepsilon\vert\ll\delta$ are small enough,
$B_{\delta}^{2n}$ is the ball of radius $\delta$ centred at the origin in $\CC^n$.
The group $G$ acts on the Milnor fibre $V_f$ and thus on its homology and cohomology groups.

\begin{definition} (cf.\ \cite{Ruan_etal})
 The {\em quantum cohomology group} of the pair $(f,G)$ is 
\begin{equation}\label{quantum_cohomology}
\calH_{f,G}= \bigoplus_{g\in G} \calH_g\,,
\end{equation}
where $\calH_g:=H^{n_g-1}(V_{f^g};\CC)^G= H^{n_g-1}(V_{f^g}/G;\CC)$ is the $G$-invariant part of the
vanishing cohomology group $H^{n_g-1}(V_{f^g};\CC)$ of the Milnor fibre
of the restriction of $f$ to the fixed point set of $g$. If $n_g=1$, this means the cohomology group  $\widetilde{H}^0(V_{f^g};\CC)$ reduced modulo a point. (We keep the notations without the tilde not to overload them.) If $n_g=0$, we assume $H^{-1}(V_{f^g};\CC)$ to be one dimensional  with the trivial action of $G$. This means that we consider the ``critical point'' of the function of zero variables to be non-degenerate and thus to have Milnor number equal to one and this corresponds to the definition of $\calH_{f,G}$ in the form given in \cite{Ruan_etal}.
\end{definition}

\begin{remarks}
 {\bf 1.} One can show that the restriction $f^g$ of $f$ to $(\CC^n)^g$ has an isolated critical
 point at the origin. Therefore its Milnor fibre is homotopy equivalent to a bouquet of
 spheres of dimension $(n_g-1)$.
 
 \noindent {\bf 2.} For a germ $f:(\CC^n,0)\to (\CC,0)$, let ${\rm Re\,}f:(\CC^n,0)\to (\RR,0)$ be its real
 part and let $V_{{\rm Re\,}f}=({\rm Re\,}f)^{-1}(\varepsilon)\cap B_{\delta}^{2n}$
 ($0<\varepsilon\ll\delta$) be its ``real Milnor fibre''.
 In~\cite{Ruan_etal} the space $\calH_g$ is defined as
 $\calH_g:=H^{n_g}(B_{\delta}^{2n},V_{{\rm Re\,}f^g};\CC)^G$. However this space
 is canonically isomorphic to $H^{n_g-1}(V_{f^g};\CC)^G$ (with the conventions for $n_g=0,1$ in the
 definition above).
 
 \noindent {\bf 3.} One can consider the {\em quantum homology group} of the pair $(f,G)$ defined as 
\begin{equation}\label{quantum_homology}
 \bigoplus_{g\in G} H_{n_g-1}(V_{f^g};\CC)^G\,,
\end{equation}
 where each summand on the right hand side is the $G$-invariant part of the
 (middle) homology group $H_{n_g-1}(V_{f^g};\CC)^G= H_{n_g-1}(V_{f^g}/G;\CC)$ of the Milnor fibre
 of the restriction of $f$ to the fixed point set of $g$. The majority of the constructions
 below are valid both for the quantum cohomology group and for the homology one. 
 \end{remarks}

Our aim is to define orbifold versions of the Milnor lattice and of the intersection form.
In singularity theory the intersection form is traditionally considered on the vanishing
homology group. Therefore below we shall mostly consider the quantum homology group
using the same notations $\calH_{f,G}$ and $\calH_g$. (In fact in \cite{Ruan_etal}
the description of the quantum cohomology group and of some structure on it starts
from the discussion of the corresponding homology group.)

%%%%%%%%%%%%%%%%%%%%%%%%%%%%%%%%%
\section{Orbifold monodromy operator} \label{sect:mono}
%%%%%%%%%%%%%%%%%%%%%%%%%%%%%%%%%
For a germ $f:(\CC^n,0)\to(\CC,0)$ with an isolated critical point at the origin the (classical)
monodromy transformation is a map $\varphi_f$ from the Milnor fibre
$V_f$ to itself
induced by rotating the (noncritical) value $\varepsilon$ around zero counterclockwise (see, e.g.,
\cite{AGV2}). If the function $f$ is quasihomogeneous, i.e., if there exist positive integers
$w_1$, \dots, $w_n$ and $d$ such that
$f(\lambda^{w_1}z_1, \ldots, \lambda^{w_n}z_n)=\lambda^df(z_1, \ldots, z_n)$
for $\lambda\in\CC$, this transformation can be defined by
\[
\varphi_f(z_1, \ldots, z_n)=(\exp(2\pi i \cdot w_1/d)z_1, \ldots , \exp(2\pi i \cdot w_n/d)z_n).
\]
This transformation is an element of the symmetry group of the function $f$. In the
FJRW theory it is usually denoted by $J$. The fix point set $(\CC)^J$ of the element $J$ is zero-dimensional.
Therefore the corresponding summand in (\ref{quantum_cohomology}) is one-dimensional. A generator of this
summand represents the unit element of the corresponding cohomological field theory.
The action of the monodromy transformation on the vanishing homology group of the singularity
$f$ is called the (classical) monodromy operator and will be denoted by $\varphi_f$ as well.

If the germ $f$ is invariant with respect to an action of a finite abelian group $G$ on $\CC^n$,
the classical monodromy transformation $\varphi_f$ can be assumed to be $G$-equivariant.
This implies that it preserves the fixed point sets of subgroups of $G$ in the Milnor fibre,
i.e., for a subgroup $K\subset G$ (in particular, for $K=\langle g\rangle$, $g\in G$), the
map $\varphi_f$ sends $V_{f^K}=V_f\cap (\CC^n)^K$ to itself and also induces a map
$\widehat{\varphi}_{f^K}$ from the quotient space $V_{f^K}/G$ to itself. The actions of these
maps on the homology groups $\calH_g$ define a map from the quantum homology group
$\calH_{f,G}$ to itself. 

The orbifold monodromy zeta function of
a pair $(f,G)$ consisting of a germ of a function $(\CC^n,0)\to(\CC,0)$ (not necessarily
non-degenerate, i.e.\ with an isolated critical point at the origin) and a finite group $G$ of its symmetries
(not necessarily abelian)
was defined in \cite{EG-Edinburgh}. 
We recall the definition for an abelian group $G$. The usual monodromy zeta function
of the transformation ${\widehat{\varphi}}_{f^{\langle g\rangle}}$ is defined by
\begin{equation*}
\zeta_{{\widehat{\varphi}}_{f^{\langle g\rangle}}}(t)=\prod\limits_{q\ge 0} 
\left(\det({\rm id}-t\cdot {\widehat{\varphi}}_{f^{\langle g\rangle}}^{*}
{\rm \raisebox{-0.5ex}{$\vert$}}{}_{H^q_c(V_{f^{\langle g\rangle}}/G;\RR)})\right)^{(-1)^q}\,
\end{equation*}
The definition of the orbifold monodromy zeta function is inspired by the notion of the orbifold spectrum (see, e.g., \cite{BH2, ET}). For an
element $g \in G$ acting on $\CC^n$ by
\[ g(z_1, \ldots , z_n)=(\exp(2 \pi i \cdot r_1)z_1, \ldots , \exp(2\pi i \cdot r_n)z_n)
\]
where $0 \leq r_j < 1$, $j=1, \ldots , n$, its {\em age} (or fermion shift number) \cite{Ito-Reid, Zaslow} is 
\[ {\rm age}(g) = \sum_{j=1}^n r_j \in \QQ_{\geq 0}.
\]
The map $g \mapsto \exp(2\pi i \cdot {\rm age}(g))$ defines a character
$\alpha_{\rm age} \in G^\ast={\rm Hom}(G,\CC^\ast)$. 
For an abelian group $G$, the orbifold monodromy zeta function $ \zeta^{{\rm orb}}_{f,G}(t)$ is given by the following equation
 \begin{equation*}
  \zeta^{{\rm orb}}_{f,G}(t)= \prod\limits_{g\in G}
  \left(\zeta_{{\widehat{\varphi}}_{f^{\langle g\rangle}}}(\exp(-2\pi i\,{\rm age\,}(g))t)\right)\,.
 \end{equation*}
The reduced orbifold monodromy zeta function $\overline{\zeta}^{{\rm orb}}_{f,G}(t)$ is defined by
$$
  \overline{\zeta}^{{\rm orb}}_{f,G}(t)=
  \zeta^{{\rm orb}}_{f,G}(t)\left/\prod\limits_{g\in G}(1-\exp(-2\pi i\,{\rm age\,}(g))t) \right..
$$

It was shown that the reduced orbifold monodromy zeta functions of
Berglund--H\"ubsch--Henningson dual pairs (not necessarily non-degenerate ones) either coincide or are inverse
to each other depending on the number $n$ of variables. If the function $f$ has an isolated critical point
at the origin, the reduced monodromy zeta function coincides with the {\em characteristic polynomial} of
the pair $(f,G)$ defined in \cite{ET} (which is not always a polynomial).

The definition of the orbifold monodromy zeta function leads to the following definition.

\begin{definition} The {\em orbifold monodromy operator} $\varphi_{f,G}$ on the quantum (co)homology group
$\calH_{f,G}$ is the direct sum of the operators $\alpha_{\rm age}(g) \cdot \widehat{\varphi}_{f^g}$ on
$\calH_g$.
\end{definition}

One can show that the reduced monodromy zeta function $\overline{\zeta}^{\rm orb}_{f,G}(t)$ coincides with
the zeta function of the orbifold monodromy operator $\varphi_{f,G}$ .

The orbifold monodromy operator does not preserve the natural lattice $H_{n_g-1}(V_{f^g};\ZZ)^G$
in $\calH_g$ (and, in general, no lattice in it at all). A lattice in $\calH_{f,G}$ preserved by
the orbifold monodromy operator $\varphi_{f,G}$ will be described below.

%%%%%%%%%%%%%%%%%%%%%%%%%%%%%%%%%
\section{Group rings} \label{sect:rings}
%%%%%%%%%%%%%%%%%%%%%%%%%%%%%%%%%
Let $R$ be a commutative ring with unity (usually either the field $\CC$ of complex numbers
or the ring $\ZZ$ of integers).
For a finite abelian group $K$, let $R[K]$ be the corresponding group ring.
It is a free $R$-module with the basis $\{e_g\}$ whose elements correspond to the elements $g$ of the
group $K$. Let $K^*={\rm Hom\,}(K,\CC^*)$ be the group of characters of the group $K$.
As an abstract group $K^*$ is isomorphic to $K$, but not in a canonical way.

The space $\CC[K]$ carries a natural representation of the group $K$ defined by $he_g=e_{hg}$ for
$h\in K$. A change of the basis permits to identify $\CC[K]$ as a vector space (not as a ring) with the vector space
$\CC[K^*]$. Namely, one should define the new basis $\e_{\alpha}$, $\alpha\in K^*$, by
\begin{equation}\label{change}
 \e_{\alpha}:=\sum_{g\in K}\left(\alpha(g)\right)^{-1}e_g\,.
\end{equation}
In the other direction one has
\begin{equation}\label{change_back}
e_g=\frac{1}{\vert K\vert}\sum_{\alpha\in K^*}\left(\alpha(g)\right)\e_{\alpha}\,.
\end{equation}

For a character $\beta:K\to\CC^*$, let $\psi_{\beta}$ be the linear map from $\CC[K]$ to $\CC[K]$
defined by
$$
\psi_{\beta}(e_g)=\beta(g)e_g\,.
$$
One has
$$
\psi_{\beta}(\e_{\alpha})=\sum_{g\in K}\left(\alpha(g)\right)^{-1}\beta(g)e_g=
\sum_{g\in K}\left(\alpha(g)(\beta(g))^{-1}\right)^{-1}e_g=\e_{\alpha\beta^{-1}}\,.
$$
This implies that the map $\psi_{\beta}$ preserves the lattice $\ZZ[K^*]\subset\CC[K^*]$.

For a subgroup $H$ of $K$ one has a natural map from $K^*$ to $H^*$: the restriction of characters.
This map is epimorphic. It induces a ring epimorphism $r^K_H:\CC[K^*]\to \CC[H^*]$.
\begin{lemma} \label{lem:Ker}
 The kernel ${\rm Ker\,} r^K_H$ of the homomorphism $r^K_H$ coincides with the subspace of $\CC[K]$ generated by the basis
 elements $e_g$ with $g\in K\setminus H$.
\end{lemma}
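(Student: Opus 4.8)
The plan is to compute directly the image $r^K_H(e_g)$ of each group-ring basis element $e_g$ (for $g\in K$), re-expressed in the basis $\{e_h\}_{h\in H}$ of $\CC[H^*]\cong\CC[H]$, and to show that $r^K_H(e_g)=e_g$ when $g\in H$ while $r^K_H(e_g)=0$ when $g\in K\setminus H$. Since the $e_g$, $g\in K$, form a basis of $\CC[K^*]$ (via the identification given by (\ref{change})), this at once identifies ${\rm Ker}\,r^K_H$ with the span of the $e_g$, $g\in K\setminus H$, as claimed, and incidentally shows that $r^K_H$ restricts to an isomorphism from the span of the $e_g$, $g\in H$, onto $\CC[H^*]$.

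First I would unwind what $r^K_H$ does in the $\e$-basis. By construction $r^K_H$ is the ring homomorphism induced by the restriction of characters $K^*\to H^*$, so on group-ring basis elements it is $r^K_H(\e_\alpha)=\e_{\alpha\vert_H}$, where $\e_\beta=\sum_{h\in H}(\beta(h))^{-1}e_h$ for $\beta\in H^*$ is the analogue of (\ref{change}) for the subgroup $H$ (and $\alpha\vert_H(h)=\alpha(h)$ for $h\in H$). Applying $r^K_H$ to the expansion (\ref{change_back}) of $e_g$ and interchanging the two summations gives
\[
r^K_H(e_g)=\frac{1}{\vert K\vert}\sum_{\alpha\in K^*}\alpha(g)\,\e_{\alpha\vert_H}
=\frac{1}{\vert K\vert}\sum_{h\in H}\Bigl(\sum_{\alpha\in K^*}\alpha(g)\,(\alpha(h))^{-1}\Bigr)e_h
=\frac{1}{\vert K\vert}\sum_{h\in H}\Bigl(\sum_{\alpha\in K^*}\alpha(gh^{-1})\Bigr)e_h .
\]
The key input is now the dual orthogonality relation for the characters of the finite abelian group $K$: $\sum_{\alpha\in K^*}\alpha(x)$ equals $\vert K\vert$ if $x$ is the identity of $K$ and $0$ otherwise. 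Hence only the terms with $gh^{-1}=1$, i.e.\ $h=g$, contribute. If $g\in H$ this yields $r^K_H(e_g)=e_g$; if $g\in K\setminus H$, then $gh^{-1}\neq 1$ for every $h\in H$, so $r^K_H(e_g)=0$. Therefore the $e_g$ with $g\in K\setminus H$ span ${\rm Ker}\,r^K_H$, which is the assertion.

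I do not expect a real obstacle: the argument is a short orthogonality computation. The only point demanding a little care is the bookkeeping — correctly translating the abstractly defined map $r^K_H$ into the $\e$-bases on both sides and keeping track of the identifications $\CC[K^*]\cong\CC[K]$ and $\CC[H^*]\cong\CC[H]$ furnished by (\ref{change}); once these are fixed, the character sum does all the work.
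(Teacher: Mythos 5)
Your proof is correct and follows essentially the same route as the paper's: expand $e_g$ via (\ref{change_back}), apply $r^K_H$ on the $\e_\alpha$-basis, and kill the resulting character sum for $g\in K\setminus H$. The only (harmless) difference is that you push the computation all the way into the $e_h$-basis of $\CC[H^*]$ and invoke the full orthogonality relation over $K^*$, which gives $r^K_H(e_g)=e_g$ for $g\in H$ outright and lets you dispense with the paper's concluding dimension count (which relies on surjectivity of $r^K_H$).
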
\label{lemma1}

\begin{proof}
 Let $A_H$ be the kernel of the natural map $K^*\to H^*$. From (\ref{change_back}) one has
 $$
 r^K_H e_g=\frac{1}{\vert K\vert}\sum_{\beta\in H^*}
 \left(\sum_{\alpha\in K^*:%%\atop
 \alpha_{\vert H}=\beta}\alpha(g)\right)\widehat{e}_\beta=
 \frac{1}{\vert K\vert}\sum_{\beta\in H^*}
 \left(\widehat{\beta}(g) \sum_{\alpha\in A_H}\alpha(g)\right)\widehat{e}_\beta\,,
 $$
 where $\widehat{\beta}$ is an element of $K^*$ such that $\widehat{\beta}_{\vert H}=\beta$.
 The element $g$ as an element of $\left(K^*\right)^*=K$ defines a non-trivial character on
 the subgroup $A_H$. Therefore $\sum\limits_{\alpha\in A_H}\alpha(g)=0$ and thus $r^K_H e_g=0$.
 This shows that $\langle e_g:g\in K \setminus H\rangle\subset {\rm Ker\,}r^K_H$. On the other hand the dimensions of
 these spaces are equal to $\vert K\vert-\vert H\vert$.
\end{proof}

\begin{remark}
 If $K$ is the maximal group $G_f$ of diagonal symmetries of an invertible polynomial $f$ and
 $H$ ia a subgroup of $K$, then the dual group $\widetilde{H}$ in the Berglund--H\"ubsch--Henningson dual
 pair $(\widetilde{f},\widetilde{H})$ is the kernel of the map $K^*\to H^*$ indicated above.
 Pay attention that ${\rm Ker\,} r^K_H$ is not isomorphic to $\CC[\widetilde{H}]$. (In particular,
 the dimension of the latter one is equal to $\vert K\vert/\vert H\vert$.)
\end{remark}

The map $\psi_{\beta}$ described above preserves the lattice
$\ZZ[K^*]\cap {\rm Ker\,} r^K_H\subset {\rm Ker\,} r^K_H$.

%%%%%%%%%%%%%%%%%%%%%%%%%%%%%%%%%
\section{Orbifold Milnor lattice} \label{sect:Milnor}
%%%%%%%%%%%%%%%%%%%%%%%%%%%%%%%%%
Here we define a lattice in the quantum homology group $\calH_{f,G}$
which can be considered as an orbifold version of the Milnor lattice.

Let $G$ be a finite abelian group acting faithfully on $(\CC^n,0)$. Let $f:(\CC^n,0)\to(\CC,0)$ be a germ of a $G$-invariant
holomorphic function with an isolated critical point at the origin.
For a point $x\in \CC^n$, let
$G_x=\{g\in G\, | \, gx=x\}$ be the isotropy subgroup of $x$. One has $(\CC^n)^K=\{x\in \CC^n \,|\,
G_x\supset K\}$. Let ${\rm Iso\,}G$ be the set of the subgroups of $G$ which are isotropy subgroups
of some points (i.e., $K\in {\rm Iso\,}G$ iff $\exists x\in\CC^n:G_x=K$).

Let $K$ be a subgroup of $G$ belonging to ${\rm Iso\,}G$. Let $E_K\subset \CC[K^*]$ be the
intersection of the kernels of the maps $r^K_H$ for all $H \in {\rm Iso\,}G$ such that
$H\varsubsetneq K$. Lemma~\ref{lem:Ker} implies that the subspace $E_K$ is generated by
all the basis elements $e_g$ of $\CC[K]$ with
$$
g\in {{\stackrel{\circ}{K}}}:=
K\setminus\bigcup\limits_{\begin{array}{c}H\in{\rm Iso\,}G,\atop H\varsubsetneq K\end{array}}H\,.
$$

There is a natural lattice $\ZZ[K^*]$ in $\CC[K^*]$. Its intersection with the subspace $E_K$
gives a lattice there. Using it one can define a lattice in the quantum homology group
$\calH_{f,G}$ in the way described below. The definition of the lattice $\ZZ[K^*]\cap E_K$
(and thus of the corresponding lattice in $\calH_{f,G}$) does not take into account the ages
of the elements of the group $G$ although they constitute an important part of the quantum
singularity theory. Moreover symmetric bilinear forms on $\calH_{f,G}$ which can be constructed
in a somewhat natural way and which could be considered as orbifold analogues of the (symmetric)
intersection form on the vanishing homology group of a singularity appear to be not integral
or at least not even on the corresponding lattice.
Therefore we consider another lattice in $\CC[K^*]$ and thus in $\calH_{f,G}$.

Let $\alpha_K\in K^*$ be the restriction of $\alpha_{\rm age}\in G^*$ to $K$ (in particular,
$\alpha_G=\alpha_{\rm age}$) and let $p_K$ be the order of $\alpha_K$. Let $\ZZ^{(p_K)}[K^*]$
be the sublattice of $\ZZ[K^*]$ defined by
$$
\ZZ^{(p_K)}[K^*]=\left\{\sum\limits_{\alpha\in K^*}m_{\alpha}\widehat{e}_{\alpha}\, \left| \,
\forall \beta\in K^*:\ \sum_{j=1}^{p_K} m_{\beta\alpha_K^j}
\ \ {\text{is divisible by}}\ \ p_K\right.\right\}\,.
$$
(In other words the sublattice $\ZZ^{(p_K)}[K^*]$ is the kernel of the natural map
$\ZZ[K^*]\to\ZZ_{p_K}[K^*]$.)
Let $E_K^{\ZZ}:=E_K\cap \ZZ^{(p_K)}[K^*]$.

Let
$$
\calH_K:=H_{n_K-1}(V_{f^K};\CC)^G\,.
$$
The space $\calH_K$ contains the natural lattice
$\calH_K^{\ZZ}=H_{n_K-1}(V_{f^K};\ZZ)^G$.

By definition
$$
\calH_{f,G}=\bigoplus_{g\in G} \calH_g\,.
$$
All the summands on the right hand side are of the form $\calH_K$ for $K\in {\rm Iso\,}G$.
The space $\calH_K$ appears as the summand $\calH_g$ if and only if $g\in{\stackrel{\circ}{K}}$.
Thus one has
$$
\calH_{f,G}=\bigoplus_{K\in {\rm Iso\,}G}\bigoplus_{g\in {\stackrel{\circ}{K}}} \calH_K\,.
$$
Therefore
\begin{equation} \label{eq:qhg}
\calH_{f,G}=\bigoplus_{K\in {\rm Iso\,}G}E_K\otimes \calH_K\,.
\end{equation}
The tensor product
of two complex vector spaces with distinguished lattices contains a natural lattice as well.
Therefore the quantum homology group $\calH_{f,G}$ contains the natural lattice
$\Lambda_{f,G}=\bigoplus\limits_{K\in {\rm Iso\,}G} E_K^{\ZZ}\otimes \calH_K^{\ZZ}$.

Let us recall that $\psi_{\alpha_{K}}$ is a map from $\CC[K]$ to itself sending the basis
element $e_g$ to $\alpha_{K}(g)e_g$. The orbifold monodromy operator is
$$
\varphi_{f,G} = \bigoplus_{K\in {\rm Iso\,}G}\psi_{\alpha_{K}}\otimes \widehat{\varphi}_{f^K},
$$
where $\widehat{\varphi}_{f^K}$ is the map $\calH_K\to\calH_K$ induced by the classical monodromy operator.
Since $(\psi_{\alpha_{K}})_{\vert E_K}$ and $\widehat{\varphi}_{f^K}$ preserve the corresponding lattices
in $E_K$ and in $\calH_K$, the orbifold monodromy operator $\varphi_{f,G}$ preserves the lattice
$\Lambda_{f, G}$. Thus we have proved the following statement.

\begin{theorem}\label{theo-lattice}
 There exists a well defined lattice $\Lambda_{f, G}$ in the quantum homology group
 $\calH_{f, G}$ invariant with respect to the orbifold monodromy operator $\varphi_{f,G}$.
\end{theorem}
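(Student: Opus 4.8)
\emph{Proof plan.} The construction preceding the statement has already produced the candidate lattice: inside the decomposition $(\ref{eq:qhg})$, $\calH_{f,G}=\bigoplus_{K\in{\rm Iso\,}G}E_K\otimes\calH_K$, we take $\Lambda_{f,G}=\bigoplus_{K\in{\rm Iso\,}G}E_K^{\ZZ}\otimes\calH_K^{\ZZ}$, where $E_K^{\ZZ}=E_K\cap\ZZ^{(p_K)}[K^*]$ and $\calH_K^{\ZZ}=H_{n_K-1}(V_{f^K};\ZZ)^G$. Two things must be checked: (i) that $\Lambda_{f,G}$ is a full lattice in the complex vector space $\calH_{f,G}$, and (ii) that the orbifold monodromy operator $\varphi_{f,G}=\bigoplus_{K}\psi_{\alpha_K}\otimes\widehat{\varphi}_{f^K}$ carries it onto itself. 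I would do (ii) first, since it is the structural point, and handle (i) as rationality bookkeeping.

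For (ii): since $\varphi_{f,G}$ and $\Lambda_{f,G}$ both decompose as a direct sum over $K\in{\rm Iso\,}G$, and on each summand as a tensor product of a group-ring operator with a homology operator, it is enough to show that $\psi_{\alpha_K}$ preserves $E_K^{\ZZ}$ and that $\widehat{\varphi}_{f^K}$ preserves $\calH_K^{\ZZ}$. The second holds because the classical geometric monodromy of $f^K$ is a $G$-equivariant self-homeomorphism of the Milnor fibre $V_{f^K}$, hence acts on the free abelian group $H_{n_K-1}(V_{f^K};\ZZ)$ and preserves its $G$-invariant part. For the first I would argue ring-theoretically: $E_K$, being an intersection of kernels of the ring homomorphisms $r^K_H$, is an ideal of $\CC[K^*]$; and $\ZZ^{(p_K)}[K^*]$ is an ideal of the group ring $\ZZ[K^*]$, since multiplication by a group element $\e_\gamma$ permutes the cosets of $\langle\alpha_K\rangle$ in $K^*$ and so preserves the congruences $\sum_{j=1}^{p_K}m_{\beta\alpha_K^{j}}\equiv 0\pmod{p_K}$ defining it; hence $E_K^{\ZZ}=E_K\cap\ZZ^{(p_K)}[K^*]$ is an ideal of $\ZZ[K^*]$. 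On the other hand the formula $\psi_\beta(\e_\alpha)=\e_{\alpha\beta^{-1}}$ from Section~\ref{sect:rings} says that $\psi_{\alpha_K}(\e_\alpha)=\e_\alpha\cdot\e_{\alpha_K}^{-1}$, i.e.\ $\psi_{\alpha_K}$ is multiplication by the unit $\e_{\alpha_K}^{-1}$ of $\ZZ[K^*]$, and multiplication by a unit carries every ideal of $\ZZ[K^*]$ onto itself. Therefore $\psi_{\alpha_K}$ preserves $E_K^{\ZZ}$, and $\varphi_{f,G}$ preserves $\Lambda_{f,G}$.

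For (i): on the homology factor, since $G$ is finite the $G$-invariant part $\calH_K^{\ZZ}$ of the free abelian group $H_{n_K-1}(V_{f^K};\ZZ)$ complexifies to $\calH_K=H_{n_K-1}(V_{f^K};\CC)^G$, so it is a full lattice. On the group-ring factor, although Lemma~\ref{lem:Ker} describes $E_K$ via the basis $\{e_g\}$, the subspace $E_K$ is actually rational with respect to the lattice $\ZZ[K^*]$: in the basis $\{\e_\alpha\}$ the map $r^K_H$ is the $\ZZ$-linear map $\e_\alpha\mapsto\e_{\alpha_{\vert H}}$ induced by the restriction homomorphism $K^*\to H^*$, so ${\rm Ker\,}r^K_H$, and hence $E_K$, is spanned over $\ZZ$ by elements of $\ZZ[K^*]$. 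Thus $E_K\cap\ZZ[K^*]$ is a full lattice in $E_K$; and since $\ZZ^{(p_K)}[K^*]$ contains $p_K\ZZ[K^*]$, the lattice $E_K^{\ZZ}$ contains $p_K(E_K\cap\ZZ[K^*])$ and is therefore again full in $E_K$. A tensor product of full lattices is full in the tensor product of the ambient spaces, and a finite direct sum of full lattices is full, so $\Lambda_{f,G}$ is a full lattice in $\calH_{f,G}$. Finally, all the data used — the set ${\rm Iso\,}G$, the subspaces $E_K$, the integers $p_K$ and the lattices $\ZZ^{(p_K)}[K^*]$ (with $\alpha_K$ the restriction of the canonical character $\alpha_{\rm age}$), and the lattices $\calH_K^{\ZZ}$ — are attached intrinsically to the pair $(f,G)$, with no choice of an isomorphism $K\cong K^*$ involved, so $\Lambda_{f,G}$ is well defined.

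I expect the only non-formal point to be the rationality claim in (i), i.e.\ reconciling the $\{e_g\}$-description of $E_K$ with the lattice $\ZZ[K^*]$ spanned by the $\{\e_\alpha\}$ — equivalently, noticing that the maps $r^K_H$ cutting out $E_K$ are integral in the $\e$-basis. Once that is in place, (ii) is essentially the remark that $\psi_{\alpha_K}$ is multiplication by a group-ring unit, together with the standard fact that geometric monodromy acts integrally on the Milnor fibre.
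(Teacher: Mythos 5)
Your proposal is correct and takes essentially the same route as the paper: the paper simply exhibits $\Lambda_{f,G}=\bigoplus_{K}E_K^{\ZZ}\otimes\calH_K^{\ZZ}$ and observes that the two tensor factors $\psi_{\alpha_K}$ and $\widehat{\varphi}_{f^K}$ of the orbifold monodromy operator preserve the corresponding lattice factors (using that $\psi_{\alpha_K}$ permutes the basis $\{\e_\alpha\}$ within cosets of $\langle\alpha_K\rangle$, which is your ``multiplication by a unit'' observation in different words). Your additional checks --- that $E_K$ is rational with respect to $\ZZ[K^*]$ because $r^K_H$ is integral in the $\e$-basis, so that $\Lambda_{f,G}$ is a full lattice --- are correct and fill in details the paper leaves implicit.
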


\begin{definition}
The lattice $\Lambda_{f, G}$ in  $\calH_{f, G}$ will be called the {\em orbifold Milnor lattice}
of the pair $(f,G)$.
\end{definition}

%%%%%%%%%%%%%%%%%%%%%%%%%%%%%%%%%
\section{Orbifold Seifert form} \label{sect:Seifert}
%%%%%%%%%%%%%%%%%%%%%%%%%%%%%%%%%
An essential aim of this paper is to define an analogue of the intersection form on the
quantum homology group (or on the orbifold Milnor lattice). To describe properties of
the intersection form and its relations with the monodromy transformation, it is useful
to use the Seifert form (or the so-called variation operator), see, e.g., \cite{AGV2}.

For a germ $f:(\CC^n,0) \to (\CC,0)$ of a holomorphic function with an isolated critical point
at the origin, the Seifert form is a (non-degenerate) bilinear form on the vanishing homology
group (the Milnor lattice) $H_{n-1}(V_f;\ZZ)$ ($V_f$ is the Milnor fibre of $f$), or, in other words,
a linear map $L:H_{n-1}(V_f;\ZZ) \to (H_{n-1}(V_f;\ZZ))^\ast$ (see, e.g., \cite{AGV2}).
In general, this form is neither symmetric nor skew-symmetric. (The group $(H_{n-1}(V_f;\ZZ))^\ast$
dual to $H_{n-1}(V_f;\ZZ)$ is isomorphic to the relative homology group
$H_{n-1}(V_f, \partial V_f;\ZZ)$ or to the cohomology group $H^{n-1}(V_f;\ZZ)$.)
In a so-called distinguished basis of the Milnor lattice $H_{n-1}(V_f;\ZZ)$ (and the dual
basis of $(H_{n-1}(V_f;\ZZ))^\ast$), the matrix of the operator $L$ is an upper triangular matrix with
the diagonal elements equal to $(-1)^{\frac{n(n+1)}{2}}$. The intersection form $S$ on
$H_{n-1}(V_f;\ZZ)$ is equal to 
\begin{equation} \label{eq:S}
S=-L+ (-1)^n L^T,
\end{equation}
where $L^T$ is the transposed form. The monodromy operator
$\varphi_f : H_{n-1}(V_f;\ZZ) \to H_{n-1}(V_f;\ZZ)$ is given by the equation
\begin{equation} \label{eq:mon}
\varphi_f = (-1)^n L^{-1}L^T.
\end{equation} 

An important advantage of the Seifert form (compared with the intersection form) is the formula
for the Seifert form of the Sebastiani-Thom (``direct'') sum of singularities.
If $f_1:(\CC^m,0) \to (\CC,0)$ and $f_2:(\CC^n,0) \to (\CC,0)$ are two germs with isolated
critical points at the origin then the vanishing homology group $H_{m+n-1}(V_{f_1 \oplus f_2}; \ZZ)$
of the germ $f_1 \oplus f_2: (\CC^{m+n},0) \to (\CC,0)$,
$(f_1 \oplus f_2)(\underline{x}, \underline{y}) = f_1(\underline{x})+f_2(\underline{y})$,
is canonically isomorphic to the tensor product $H_{m-1}(V_{f_1};\ZZ) \otimes H_{n-1}(V_{f_2};\ZZ)$
of the corresponding vanishing homology groups. One has (see, e.g., \cite[Theorem~2.10]{AGV2})
\begin{equation} \label{eq:Thom}
L_{f_1 \oplus f_2} = (-1)^{mn} L_{f_1} \otimes L_{f_2}.
\end{equation}
A translation of this property in terms of the intersection form can be formulated in a
reasonable way only in a distinguished basis and is given by somewhat involved formulae.

If a finite group $G$ acts on $(\CC^n,0)$ and $f: (\CC^n,0) \to (\CC,0)$ is a $G$-invariant
germ with an isolated critical point at the origin, the monodromy operator preserves the
$G$-invariant part of the vanishing homology group, the restriction $L^G$ of the Seifert form
to the $G$-invariant part is non-degenerate and it is related with the restriction
of the monodromy operator and of the intersection form by the same equations as (\ref{eq:S})
and (\ref{eq:mon}). If $f_1$ and $f_2$ are two germs as above and $f_1$ is $G$-invariant
with respect to an action of a finite group $G$ on $\CC^m$, then $f_1 \oplus f_2$ is
$G$-invariant with respect to the $G$-action on $\CC^m \oplus \CC^n$ which is the trivial
extension of the one on $\CC^m$. One has
$H_{m+n-1}(V_{f_1 \oplus f_2}; \ZZ)^G \simeq
H_{m-1}(V_{f_1};\ZZ)^G \otimes H_{n-1}(V_{f_2};\ZZ)$ and
\begin{equation} \label{eq:Thom_equi}
 L^G_{f_1 \oplus f_2} = (-1)^{mn} L^G_{f_1} \otimes L_{f_2}\,.
\end{equation}

%%%%%%%%%%%%%%%%%%%%%%%%%%%%%%%%%%%%%%%%%%%%%%%%%%%%%%%%

Here we define a ``Seifert form'' (an integer valued bilinear form) on the orbifold
Milnor lattice $\Lambda_{f,G}$. For that we shall identify $\CC[K^*]\otimes \calH_K$,
$K\in {\rm Iso\,}G$, with a direct sum of vanishing homology groups of certain singularities
so that the orbifold monodromy operator on it becomes the direct sum of (the restrictions of)
the corresponding classical monodromy operators. The direct sum of the corresponding
Seifert forms on the summands
gives a bilinear form on $\CC[K^*]\otimes \calH_K$ with integer values on the lattice
$\ZZ^{(p_K)}[K^*]\otimes \calH_K^{\ZZ}$.
The direct sum over all subgroups
$K\in {\rm Iso\,}G$ of the restrictions of these forms to $E_K\otimes \calH_K$ gives a bilinear
form on $\calH_{f,G}$ with integer values on the orbifold Milnor lattice $\Lambda_{f,G}$ which will be
called the orbifold Seifert form.

The space $\CC[K^*]$ has a decomposition into parts corresponding to the orbits
of the multiplication by $\alpha_K$, i.e., to the elements of $K^*/\langle\alpha_K\rangle$:
$$
\CC[K^*]=\bigoplus_{\gamma\in K^*/\langle\alpha_K\rangle}\langle\widehat{e}_{\alpha}:
[\alpha]=\gamma\rangle\,.
$$
Each summand $B_{\gamma}=\langle\widehat{e}_{\alpha}: [\alpha]=\gamma\rangle$ on the right hand side
can be represented as the direct sum of two subspaces:
$B_{\gamma,1}\cong\CC$ generated by the element $\sum\limits_{[\alpha]=\gamma}\widehat{e}_{\alpha}$
and $B_{\gamma,2}$ consisting of the elements
$\sum\limits_{[\alpha]=\gamma}c_{\alpha}\widehat{e}_{\alpha}$ such that $\sum c_{\alpha}=0$.
The subspaces $B_{\gamma,1}\cong\CC$ and $B_{\gamma,2}$ contain natural lattices:
$\langle\sum\limits_{[\alpha]=\gamma}\widehat{e}_{\alpha}\rangle$ and
$\ZZ^{(p_K)}[K^*]\cap B_{\gamma,2}=\ZZ[K^*]\cap B_{\gamma,2}$ respectively.

If $p_k=1$, then $B_{\gamma,2}=0$. If $p_k \geq 2$, then the space $B_{\gamma,2}$ with the
lattice $\ZZ^{(p_K)}[K^*]\cap B_{\gamma,2}=\ZZ[K^*]\cap B_{\gamma,2}$
in it can be identified with the vanishing homology group of the $A_{p_K-1}$-singularity
with the Milnor lattice in it in the following way.
The $A_{p_K-1}$-singularity is defined by the function $u^{p_K}$. The Milnor fibre
$V_{u^{p_K}}=\{u^{p_K}=1\}$ of it consists of the points $u_j=\exp{(-2\pi i (j-1)/p_K)}$,
$j=1,\ldots,p_K$. A (distinguished) basis of the Milnor lattice in the vanishing homology
group $\widetilde{H}_0(V_{u^{p_K}})$ is formed by the vanishing cycles
$\Delta_1=u_1-u_2$, $\Delta_2=u_2-u_3$, \dots, $\Delta_{p_K-1}=u_{p_K-1}-u_{p_K}$
(see, e.g., \cite[Theorem 2.5]{AGV2}, where one has a small misprint (a wrong sign)
in the formula for $u_j$; in the initial Russian version the sign is correct).
Sometimes it is convenient to consider also the vanishing cycle $\Delta_{p_K}=u_{p_K}-u_{1}$
keeping in mind that $\sum\limits_{j=1}^{p_k}\Delta_{j}=0$.
The Seifert operator $L$ is defined by $L\Delta_1=-\Delta_1^*$,
$L\Delta_j=-\Delta_j^*+\Delta_{j-1}^*$ for $j>1$, where $\Delta_1^*$, $\Delta_2^*$, \dots, $\Delta_{p_K-1}^*$
is the dual basis in the dual lattice. (The monodromy transformation of the function $u^{p_K}$
permutes the points $u_j$ cyclically sending $u_j$ to $u_{j-1}$ and therefore sending
the cycle $\Delta_j$ to $\Delta_{j-1}$ for $j>1$ and the cycle $\Delta_1$ to
$\Delta_{p_K}=-\sum\limits_{j=1}^{p_K-1}\Delta_j$.) Let us consider the following (integer) basis of
$B_{\gamma,2}$. Let $\alpha\in K^*$ be an arbitrary representative of $\gamma$.
Then the set $\delta_1=\widehat{e}_{\alpha_K\alpha}-\widehat{e}_{\alpha}$,
$\delta_2=\widehat{e}_{\alpha_K^2\alpha}-\widehat{e}_{\alpha_K\alpha}$, \dots,
$\delta_{p_K-1}=\widehat{e}_{\alpha_K^{p_K-1}\alpha}-\widehat{e}_{\alpha_K^{p_K-2}\alpha}$
is a basis of $\ZZ^{(p_K)}[K^*]\cap B_{\gamma,2}$. We identify $B_{\gamma,2}$ with the
vanishing homology group $\widetilde{H}_0(V_{u^{p_K}})$ of the $A_{p_K-1}$-singularity
by identifying the vanishing cycles $\Delta_j$ with the basis elements $\delta_j$.
This identification respects the corresponding lattices.

\begin{remark}
 It is important to note that the bilinear form on $B_{\gamma,2}$ defined through this
 identification by the Seifert form on the $A_{p_K-1}$-singularity and therefore also analogues
 of the Seifert forms defined below do not depend on the choice of the representative
 $\alpha$ in an orbit $\gamma$.
\end{remark}

To define an analogue $\widehat{L}_K$ of the Seifert form on $\CC[K^*]\otimes\calH_K$,
we shall define an analogue $\widehat{\ell}_K$ of the Seifert form on $\CC[K^*]$.
Let $\widehat{\ell}_K$ be the direct sum of the Seifert forms $\ell_{\gamma,2}=L_{u^{p_K}}$ of the
$A_{p_K-1}$-singularity on $B_{\gamma, 2}$ and the form $\ell_{\gamma,1}$ on $B_{\gamma,1}\cong\CC$
defined by
$$
\ell_{\gamma,1}\left(\sum\limits_{\alpha:[\alpha]=\gamma}\widehat{e}_{\alpha},
\sum\limits_{\alpha:[\alpha]=\gamma}\widehat{e}_{\alpha} \right)
=-1\,.
$$
Let $I:\CC[K^*]\to\CC[K^*]$ be the operator which is the identity on $B_{\gamma, 2}$ and minus
the identity on $B_{\gamma, 1}$ for each $\gamma$.

\begin{proposition}
 The bilinear form $\widehat{\ell}_K$ is integer valued on the lattice $E_K^{\ZZ}$ and possesses the property
 \begin{equation}\label{psi_alpha}
  \psi_{\alpha_K}=-I \, \widehat{\ell}_K^{-1} \widehat{\ell}_K^T\,.
 \end{equation}
\end{proposition}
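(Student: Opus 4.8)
The plan is to verify both assertions blockwise with respect to the decomposition
$\CC[K^*]=\bigoplus_{\gamma}(B_{\gamma,1}\oplus B_{\gamma,2})$ over the orbits $\gamma\in K^*/\langle\alpha_K\rangle$, because $\psi_{\alpha_K}$, $\widehat{\ell}_K$ and $I$ are all block-diagonal for this decomposition (and hence so are $\widehat{\ell}_K^T$ and $\widehat{\ell}_K^{-1}$, the latter making sense since each scalar block $\ell_{\gamma,1}=(-1)$ and each block $\ell_{\gamma,2}=L_{u^{p_K}}$ is non-degenerate, being upper triangular with $-1$ on the diagonal in the $\Delta$-basis). First I would record the action of $\psi_{\alpha_K}$. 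By the formula $\psi_\beta(\widehat{e}_\alpha)=\widehat{e}_{\alpha\beta^{-1}}$ from Section~\ref{sect:rings}, $\psi_{\alpha_K}$ cyclically shifts the basis vectors $\widehat{e}_{\alpha_K^j\alpha}$ within each orbit; hence it fixes $\sum_{[\alpha]=\gamma}\widehat{e}_\alpha$, so it is the identity on $B_{\gamma,1}$, and it preserves $B_{\gamma,2}$. On $B_{\gamma,2}$, in the basis $\delta_1,\dots,\delta_{p_K-1}$ one computes $\psi_{\alpha_K}(\delta_j)=\delta_{j-1}$ for $j>1$ and $\psi_{\alpha_K}(\delta_1)=\widehat{e}_\alpha-\widehat{e}_{\alpha_K^{p_K-1}\alpha}=\delta_{p_K}=-\sum_{j=1}^{p_K-1}\delta_j$, using the telescoping relation $\sum_{j=1}^{p_K}\delta_j=0$. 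This is exactly the classical monodromy $\varphi_{u^{p_K}}$ of the $A_{p_K-1}$-singularity under the identification $\delta_j\leftrightarrow\Delta_j$ used to define $\ell_{\gamma,2}$.

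Next I would deduce (\ref{psi_alpha}) on each block. On $B_{\gamma,2}$, $I$ is the identity and $\widehat{\ell}_K=\ell_{\gamma,2}=L_{u^{p_K}}$, so the right-hand side of (\ref{psi_alpha}) equals $-L_{u^{p_K}}^{-1}L_{u^{p_K}}^T$, which by the classical relation (\ref{eq:mon}) for $f=u^{p_K}$ (where $n=1$) is precisely $\varphi_{u^{p_K}}=\psi_{\alpha_K}|_{B_{\gamma,2}}$. On the one-dimensional block $B_{\gamma,1}$, both $\widehat{\ell}_K=\ell_{\gamma,1}$ and $I$ act as the scalar $-1$, so $-I\,\widehat{\ell}_K^{-1}\widehat{\ell}_K^T$ is the scalar $-(-1)(-1)^{-1}(-1)=1$, which agrees with $\psi_{\alpha_K}|_{B_{\gamma,1}}=\mathrm{id}$. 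Since all operators involved respect the block decomposition, (\ref{psi_alpha}) holds on all of $\CC[K^*]$. (When $p_K=1$ this is vacuous: $\alpha_K=1$, every $B_{\gamma,2}=0$, and $\psi_{\alpha_K}=\mathrm{id}$.)

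For the integrality statement I would first observe that the conditions defining $\ZZ^{(p_K)}[K^*]$ couple only coefficients lying in a common orbit $\gamma$ (the indices $\beta\alpha_K^j$, $j=1,\dots,p_K$, run over the orbit $[\beta]$), so $\ZZ^{(p_K)}[K^*]=\bigoplus_\gamma\bigl(\ZZ^{(p_K)}[K^*]\cap B_\gamma\bigr)$. Within a fixed $\gamma$, if $x=\sum_{[\alpha]=\gamma}m_\alpha\widehat{e}_\alpha\in\ZZ^{(p_K)}[K^*]$ then the divisibility condition forces $\sum_\alpha m_\alpha=p_K s$ with $s\in\ZZ$; hence the $B_{\gamma,1}$-component of $x$ is $s\sum_{[\alpha]=\gamma}\widehat{e}_\alpha$, an integer vector, and the $B_{\gamma,2}$-component $x-s\sum_{[\alpha]=\gamma}\widehat{e}_\alpha$ then lies in $\ZZ[K^*]\cap B_{\gamma,2}=\ZZ^{(p_K)}[K^*]\cap B_{\gamma,2}$. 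Thus $\ZZ^{(p_K)}[K^*]\cap B_\gamma$ is the $\widehat{\ell}_K$-orthogonal direct sum of $\ZZ\cdot\sum_{[\alpha]=\gamma}\widehat{e}_\alpha\subset B_{\gamma,1}$, on which $\ell_{\gamma,1}$ takes the integer value $-1$ on the generator, and the $A_{p_K-1}$-Milnor lattice in $B_{\gamma,2}$, on which $L_{u^{p_K}}$ is integral. Therefore $\widehat{\ell}_K$ is integer valued on $\ZZ^{(p_K)}[K^*]$, hence a fortiori on the sublattice $E_K^\ZZ=E_K\cap\ZZ^{(p_K)}[K^*]$.

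The main obstacle I anticipate is entirely in the first step: correctly matching $\psi_{\alpha_K}$ on $B_{\gamma,2}$ with the classical monodromy of $u^{p_K}$ through the cyclic basis $\delta_j$ and the relation $\sum_j\delta_j=0$, and then carrying the sign conventions faithfully through (\ref{eq:mon}) and the evaluation of (\ref{psi_alpha}) on $B_{\gamma,1}$; the lattice decomposition underlying integrality is routine once one notes that the $\ZZ^{(p_K)}$-conditions are ``orbit-local''.
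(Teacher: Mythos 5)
Your proof is correct and follows essentially the same route as the paper's: block-diagonal verification over the decomposition $\bigoplus_\gamma(B_{\gamma,1}\oplus B_{\gamma,2})$, identifying $\psi_{\alpha_K}|_{B_{\gamma,2}}$ with the classical monodromy of the $A_{p_K-1}$-singularity and checking the one-dimensional blocks and the orbit-local lattice decomposition directly. You merely spell out details (the action of $\psi_{\alpha_K}$ on the $\delta_j$ and the splitting of $\ZZ^{(p_K)}[K^*]$) that the paper asserts without computation.
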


\begin{proof}
 The lattice $\ZZ^{(p_K)}[K^\ast]$ is the direct sum of the lattices $\ZZ[K^*]\cap B_{\gamma,2}$ and
 $\ZZ[K^*]\cap B_{\gamma,1}$. The form $\widehat{\ell}_K$ is integer valued on them and they are orthogonal
 to each other with respect to $\widehat{\ell}_K$. Therefore $\widehat{\ell}_K$ is integer valued on
 $\ZZ^{(p_K)}[K^*]\supset E_K^{\ZZ}$.
 
 The restriction of the operator $\psi_{\alpha_K}$ to $B_{\gamma,2}$ coincides with the
 monodromy operator of the $A_{p_K-1}$-singularity and therefore is equal to $-\widehat{\ell}_K^{-1}\widehat{\ell}_K^T$.
 The restriction of $\psi_{\alpha_K}$ to the (one-dimensional) space $B_{\gamma,1}$ is the identity and thus coincides with
 $\widehat{\ell}_K^{-1}\widehat{\ell}_K^T$.
\end{proof}

\begin{remark}
 The presence of the operator $I$ in Equation~(\ref{psi_alpha}) is related with the fact
 that, in some sense, $B_{\gamma,2}$ and $B_{\gamma,1}$ are identified with the vanishing
 homology groups of functions with different numbers of variables: $1$ and $0\mod 4$
 respectively. Stabilization of a function means addition of the sum of squares
 of new variables. If one of two functions is right-equivalent to the stabilization of the other one,
 the functions are called stably equivalent. Topological properties of stably equivalent functions
 are closely related. Their Milnor lattices can be identified. Via these identification, the Seifert forms
 of stably equivalent functions may differ, but only by a sign. Moreover, they are 4-periodic: if the numbers of
 variables of functions have the same residue modulo 4, their Seifert forms coincide. The monodromy
 transformations of stably equivalent functions are 2-periodic. Thus from the topological point of view
 only the residue of the number of variables modulo 4 matters.
 The orbifold monodromy operator acts on $B_{\gamma,2}$ and on $B_{\gamma,1}$ as on the vanishing homology
 group of the $A_{p_K-1}$-singularity of 1 variable and the $A_1$-singularity of 0 variables respectively.
 
\end{remark}

\begin{theorem}\label{orb-Z}
 The restriction $\ell_K$ of the bilinear form $\widehat{\ell}_K$ to $E_K$ is non-degenerate
 and possesses the property 
 \begin{equation}
  \psi_{\alpha_K}=-I \, \ell_K^{-1}\ell_K^T.
 \end{equation}
\end{theorem}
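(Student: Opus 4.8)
The plan is to exploit that $E_K$, though defined as a subspace of $\CC[K^*]$, is by Lemma~\ref{lem:Ker} spanned by the subset $\{e_g:g\in\stackrel{\circ}{K}\}$ of the standard basis of $\CC[K]\cong\CC[K^*]$, and that in this basis both $\psi_{\alpha_K}$ and the sign operator $I$ act diagonally. For $\psi_{\alpha_K}$ this is its definition, $\psi_{\alpha_K}e_g=\alpha_K(g)e_g$. For $I$ it follows because the eigenvalue-$1$ eigenspace of $\psi_{\alpha_K}$ is simultaneously $\langle e_g:\alpha_K(g)=1\rangle=\langle e_g:g\in\mathrm{Ker}\,\alpha_K\rangle$ and $\bigoplus_\gamma B_{\gamma,1}$ (indeed $\psi_{\alpha_K}$ is the identity on each $B_{\gamma,1}$ and acts on $B_{\gamma,2}$ as the monodromy of $A_{p_K-1}$, which has no eigenvalue $1$); hence $I=-\mathrm{id}$ on this eigenspace and $I=+\mathrm{id}$ on the sum of the other eigenspaces of $\psi_{\alpha_K}$, so $I$ is diagonal in the $e_g$-basis as well. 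In particular $\psi_{\alpha_K}$ and $I$ preserve $E_K$ and the complementary subspace $E_K^{c}:=\langle e_g:g\in K\setminus\stackrel{\circ}{K}\rangle$.

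The heart of the matter is the claim that $\widehat{\ell}_K$ is \emph{anti-diagonal} in the $e_g$-basis with respect to the involution $g\mapsto g^{-1}$: that $\widehat{\ell}_K(e_g,e_h)=0$ whenever $h\neq g^{-1}$, while $\widehat{\ell}_K(e_g,e_{g^{-1}})\neq0$ for every $g\in K$. To prove this I would substitute $e_g=\tfrac1{|K|}\sum_{\alpha\in K^*}\alpha(g)\widehat{e}_\alpha$ and split it according to $\CC[K^*]=\bigoplus_\gamma(B_{\gamma,1}\oplus B_{\gamma,2})$. The $B_{\gamma,1}$-component of $e_g$ has coefficient proportional to $\sum_{j=0}^{p_K-1}\alpha_K(g)^{j}$, hence vanishes unless $g\in\mathrm{Ker}\,\alpha_K$; thus in $\widehat{\ell}_K(e_g,e_h)$ one only ever pairs $B_{\gamma,1}$-parts with $B_{\gamma,1}$-parts and $B_{\gamma,2}$-parts with $B_{\gamma,2}$-parts, and there one inserts the explicit Seifert forms $\ell_{\gamma,1}=-1$ on the line $B_{\gamma,1}$ and $\ell_{\gamma,2}=L_{u^{p_K}}$ (with $L\Delta_1=-\Delta_1^*$, $L\Delta_j=-\Delta_j^*+\Delta_{j-1}^*$) on $B_{\gamma,2}$. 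Writing the $B_{\gamma,2}$-component of $e_g$ in the basis $\delta_1,\dots,\delta_{p_K-1}$ and evaluating the resulting finite geometric sums — using $\sum_{k=0}^{p_K-1}\zeta^{k}=0$ for every $p_K$-th root of unity $\zeta\neq1$ — collapses $\widehat{\ell}_K(e_g,e_h)$ to a non-zero multiple of $\delta_{h,g^{-1}}$; one finds $\widehat{\ell}_K(e_g,e_{g^{-1}})=-\tfrac1{|K|\,p_K}$ if $g\in\mathrm{Ker}\,\alpha_K$ and $\widehat{\ell}_K(e_g,e_{g^{-1}})=\tfrac{\alpha_K(g)}{|K|\,(1-\alpha_K(g))}$ otherwise. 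This calculation is the main obstacle; its delicate point is the case $g,h\notin\mathrm{Ker}\,\alpha_K$, where one must distinguish $\alpha_K(gh)\neq1$ (the bracket produced by the geometric sums vanishes identically) from $\alpha_K(gh)=1$ (then $gh\in\mathrm{Ker}\,\alpha_K$, the surviving sum $\sum_\gamma\gamma(gh)$ over orbit representatives equals $\tfrac{|K|}{p_K}\,\delta_{gh,e}$, and again one needs $h=g^{-1}$).

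Granting the anti-diagonality, the theorem follows formally. The set $\stackrel{\circ}{K}$ is stable under $g\mapsto g^{-1}$, since its complement is a union of subgroups; hence $\widehat{\ell}_K$ has no non-zero entries between $E_K$ and $E_K^{c}$ in either order, i.e.\ $\CC[K^*]=E_K\oplus E_K^{c}$ is an orthogonal decomposition for $\widehat{\ell}_K$, and $\widehat{\ell}_K=\ell_K\oplus\bigl(\widehat{\ell}_K|_{E_K^{c}}\bigr)$. Each summand is again anti-diagonal with non-vanishing anti-diagonal, hence non-degenerate; in particular $\ell_K$ is non-degenerate. Finally, in the identity $\psi_{\alpha_K}=-I\,\widehat{\ell}_K^{-1}\widehat{\ell}_K^{T}$ established in the preceding Proposition, the operators $\psi_{\alpha_K}$ and $I$ are block-diagonal with respect to $\CC[K^*]=E_K\oplus E_K^{c}$ because they are diagonal in the $e_g$-basis, and $\widehat{\ell}_K$, $\widehat{\ell}_K^{T}$ are block-diagonal because of the orthogonal splitting; therefore the identity restricts to the $E_K$-block and gives $\psi_{\alpha_K}=-I\,\ell_K^{-1}\ell_K^{T}$ on $E_K$, as asserted.
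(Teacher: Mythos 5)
Your proposal is correct, but it proves the theorem by a genuinely different route than the paper. The paper's proof is representation-theoretic and essentially computation-free: for each $H\in{\rm Iso\,}G$ with $H\varsubsetneq K$ the subgroup $A_H=\mathrm{Ker}(K^*\to H^*)$ permutes the basis $\{\widehat{e}_\alpha\}$, one identifies ${\rm Ker\,}r^K_H$ with the non-invariant part of this action, observes that $E_K$ is a sum of joint isotypic components of the commuting $A_H$-actions, and uses the facts that $\psi_{\alpha_K}$ commutes with these actions and $\widehat{\ell}_K$ is invariant under them to conclude that the isotypic decomposition is $\widehat{\ell}_K$-orthogonal; non-degeneracy and the relation $\psi_{\alpha_K}=-I\,\ell_K^{-1}\ell_K^T$ then restrict to $E_K$ block by block. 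You instead compute the full Gram matrix of $\widehat{\ell}_K$ in the group-element basis $\{e_g\}$ and show it is supported on the anti-diagonal $h=g^{-1}$ with non-vanishing entries; since $\stackrel{\circ}{K}$ is inversion-stable (its complement is a union of subgroups), $E_K$ and $\langle e_g: g\in K\setminus\stackrel{\circ}{K}\rangle$ are $\widehat{\ell}_K$-orthogonal and the same block-restriction argument applies. I checked your geometric-sum computation, including the delicate subcase $g,h\notin\mathrm{Ker}\,\alpha_K$ with $\alpha_K(gh)=1$, and your explicit values $\widehat{\ell}_K(e_g,e_{g^{-1}})=-\tfrac{1}{|K|p_K}$ (for $g\in\mathrm{Ker}\,\alpha_K$) and $\tfrac{\alpha_K(g)}{|K|(1-\alpha_K(g))}$ (otherwise) are correct; your identification of $\bigoplus_\gamma B_{\gamma,1}$ with $\langle e_g: g\in\mathrm{Ker}\,\alpha_K\rangle$, which makes $I$ diagonal in the $e_g$-basis, is also right. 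Both arguments turn on the same structural point --- producing a $\psi_{\alpha_K}$- and $I$-invariant complement to $E_K$ that is $\widehat{\ell}_K$-orthogonal to it --- but yours buys an explicit closed-form Gram matrix for the orbifold Seifert form in the natural basis (potentially useful for the determinant computations in the examples), at the cost of a longer calculation, while the paper's buys brevity and conceptual clarity at the cost of leaving $\ell_K$ implicit.
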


\begin{proof}
 Let $H\in {\rm Iso\,}G$, $H\varsubsetneq K$, and let $A_H$ be the kernel of the natural map
 $K^*\to H^*$. The subgroup $A_H\subset K^*$ acts on $\CC[K^*]$ by permutations of the basis
 elements $\widehat{e}_{\alpha}$. Let $\CC[K^*]=\CC[K^*]^{A_H}\oplus\CC[K^*]^{\overline{A_H}}$
 be the decomposition of $\CC[K^*]$ into the invariant part $\CC[K^*]^{A_H}$ and the
 ``non-invariant part'' $\CC[K^*]^{\overline{A_H}}$, i.e., the sum of all the parts corresponding
 to non-trivial representations of $A_H$. For $\beta\in H^*$, the group $A_H$ acts on the subspace
 $\langle\widehat{e}_{\alpha}: \alpha_{\vert H}=\beta\rangle\subset \CC[K^*]$ by the regular
 representation. This representation is the direct sum of the (one-dimensional) invariant part
 generated by the element $\sum\limits_{\alpha:\alpha_{\vert H}=\beta}\widehat{e}_{\alpha}$
 and the non-invariant part consisting of all the linear combinations of the elements 
 $\widehat{e}_{\alpha}$, $\alpha_{\vert H}=\beta$,  with the sum of the coefficients equal to zero. The non-invariant part
 coincides with the kernel of the restriction to
 $\langle\widehat{e}_{\alpha}: \alpha_{\vert H}=\beta\rangle$ of the map $r^K_H:\CC[K^*]\to\CC[H^*]$.
 This implies that ${\rm Ker\,}r^K_H=\CC[K^*]^{\overline{A_H}}$.
 
 For all subgroups $H\in {\rm Iso\,}G$ such that $H\varsubsetneq K$, the actions of the subgroups
 $A_H\subset K^*$ commute and the space $\CC[K^*]$ decomposes into the parts corresponding to
 different representations of these subgroups. The subspace $E_K$ is the intersection of the
 subspaces $\CC[K^*]^{\overline{A_H}}$ for all $H\in {\rm Iso\,}G$, $H\varsubsetneq K$.
 This means that it is the (direct) sum of all the parts on which the representations
 of all the groups $A_H$ are non-trivial.
 The operator $\psi_{\alpha_K}$ commutes with these actions and the bilinear form
 $\widehat{\ell}_K$ is invariant with respect to them. This implies that these parts are
 orthogonal to each other with respect to the bilinear form $\widehat{\ell}_K$, the restriction
 of $\widehat{\ell}_K$ to each of these parts is non-degenerate and satisfies the relation
 (\ref{psi_alpha}). This implies the statement. 
\end{proof}

\begin{definition}
 The Seifert form $\widehat{L}_K$ on $\CC[K^*]\otimes\calH_K$ is
 $$
 \widehat{L}_K=(-1)^{n_K}\widehat{\ell}_K\otimes L^G_{f^K}
 $$
 (cf.~(\ref{eq:Thom_equi}): the Seifert form $\widehat{L}_K$ is defined as the Seifert form of
 the Sebastiani-Thom sum of functions of one variable and of $n_K$ variables respectively).
\end{definition}

The representation of $\CC[K^*]$ in the form
$$
\bigoplus_{\gamma\in K^*/\langle\alpha_K\rangle}\left(\widetilde{H}_0(V_{u^{p_K}};\CC)\oplus\CC\right)
$$
gives an isomorphism between $\CC[K^*]\otimes\calH_K$ and
\begin{eqnarray*}
& & \bigoplus_{\gamma\in K^*/\langle\alpha_K\rangle}\left(\widetilde{H}_0(V_{u^{p_K}};\CC)\otimes\calH_K\oplus\calH_K\right) \\
& = & \bigoplus_{\gamma\in K^*/\langle\alpha_K\rangle}\left(H_{n_K}(V_{f^K+u^{p_K}};\CC)^G\oplus H_{n_K-1}(V_{f^K};\CC)^G\right)\,.
\end{eqnarray*}
The form $\widehat{L}_K$ is the direct sum of the Seifert forms on the first summands and the Seifert forms on the second summands stabilized to
the same number of variables.

Equation~(\ref{psi_alpha}) implies that
\begin{equation}\label{L-mono}
\psi_{\alpha_K}\otimes \widehat{\varphi}_{f^K}=-I \, \widehat{L}_K^{-1}\widehat{L}_K^{T}.
\end{equation}

\begin{theorem}
The restriction $L_K$ of the form $\widehat{L}_K$ to $E_K\otimes\calH_K$ is a non-degenerate bilinear form such that
$$
(\psi_{\alpha_K}\otimes \widehat{\varphi}_{f^K})_{\vert E_K\otimes\calH_K}=-I \, L_K^{-1}L_K^{T}.
$$
\end{theorem}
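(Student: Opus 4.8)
The plan is to obtain this theorem from equation~(\ref{L-mono}) (equivalently, from Theorem~\ref{orb-Z}) by restriction, in exactly the way Theorem~\ref{orb-Z} was obtained from the Proposition preceding it; the only new ingredient is that one tensors everything with the homological data $(\calH_K, L^G_{f^K}, \widehat{\varphi}_{f^K})$. Concretely, I would first recall from the proof of Theorem~\ref{orb-Z} that $\CC[K^*]$ decomposes as the direct sum of the isotypic components of the commuting actions of the subgroups $A_H\subset K^*$ (for $H\in {\rm Iso\,}G$, $H\varsubsetneq K$), that these components are pairwise orthogonal with respect to $\widehat{\ell}_K$, that each of them is preserved by $\psi_{\alpha_K}$ and by $I$ and carries a non-degenerate restriction of $\widehat{\ell}_K$ satisfying~(\ref{psi_alpha}), and that $E_K$ is precisely the sum of those components on which every $A_H$ acts non-trivially. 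Tensoring this decomposition with $\calH_K$ and using $\widehat{L}_K=(-1)^{n_K}\widehat{\ell}_K\otimes L^G_{f^K}$, one gets a $\widehat{L}_K$-orthogonal decomposition of $\CC[K^*]\otimes\calH_K$ whose summands are preserved by $\psi_{\alpha_K}\otimes\widehat{\varphi}_{f^K}$ and by $I\otimes{\rm id}$, with $E_K\otimes\calH_K$ equal to the sum of the summands coming from the components lying inside $E_K$.

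For the non-degeneracy part I would note that the restriction $L_K$ of $\widehat{L}_K$ to $E_K\otimes\calH_K$ is simply $(-1)^{n_K}\ell_K\otimes L^G_{f^K}$, since restricting a tensor product of bilinear forms to a tensor product of subspaces gives the tensor product of the restrictions. Here $\ell_K$ is non-degenerate by Theorem~\ref{orb-Z}, and $L^G_{f^K}$ is non-degenerate because the restriction of the Seifert form of an isolated singularity to the $G$-invariant part of its vanishing homology is non-degenerate (as recalled in Section~\ref{sect:Seifert}). A tensor product of two non-degenerate bilinear forms is non-degenerate, hence $L_K$ is non-degenerate.

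The relation is then read off by restricting~(\ref{L-mono}) to $E_K\otimes\calH_K$. The point that makes this legitimate --- and the one I would check carefully --- is that $E_K\otimes\calH_K$ is a $\widehat{L}_K$-orthogonal direct summand of $\CC[K^*]\otimes\calH_K$ on which $\widehat{L}_K$ is non-degenerate and which is invariant under $\widehat{L}_K$, under $I\otimes{\rm id}$ and under $\psi_{\alpha_K}\otimes\widehat{\varphi}_{f^K}$; for such a summand the operators $\widehat{L}_K^{-1}$ and $\widehat{L}_K^{T}$ restrict to $L_K^{-1}$ and $L_K^{T}$, so restricting both sides of~(\ref{L-mono}) yields exactly $(\psi_{\alpha_K}\otimes\widehat{\varphi}_{f^K})_{\vert E_K\otimes\calH_K}=-I\,L_K^{-1}L_K^{T}$, with $I$ now denoting $(I\otimes{\rm id})_{\vert E_K\otimes\calH_K}$. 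Alternatively one can bypass~(\ref{L-mono}) and compute directly, using $L_K^{-1}L_K^{T}=(\ell_K^{-1}\ell_K^{T})\otimes\bigl((L^G_{f^K})^{-1}(L^G_{f^K})^{T}\bigr)$ and then substituting the identity $\psi_{\alpha_K}=-I\,\ell_K^{-1}\ell_K^{T}$ from Theorem~\ref{orb-Z} together with the classical relation~(\ref{eq:mon}) for $\widehat{\varphi}_{f^K}$ applied to the $G$-invariant part of $f^K$. I expect the only real work to be bookkeeping: carrying along the orthogonal-summand structure inherited from the proof of Theorem~\ref{orb-Z} so that inversion commutes with restriction, while the homological tensor factor $(\calH_K,L^G_{f^K},\widehat{\varphi}_{f^K})$ merely comes along passively.
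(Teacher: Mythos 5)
Your proposal is correct and follows the paper's own route: the paper proves this theorem in one line as ``a direct consequence of Equation~(\ref{L-mono}) and Theorem~\ref{orb-Z},'' and your argument simply makes explicit the bookkeeping (the $\widehat{L}_K$-orthogonal isotypic decomposition tensored with $\calH_K$, non-degeneracy of a tensor product of non-degenerate forms, and compatibility of inversion with restriction to an orthogonal summand) that the paper leaves implicit. No changes needed.
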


\begin{proof}
This is a direct consequence of Equation~(\ref{L-mono}) and Theorem~\ref{orb-Z}.
\end{proof}

\begin{definition} The {\em orbifold Seifert form} $L_{f,G}$ on the quantum homology group $\calH_{f,G} = \bigoplus E_K \otimes \calH_K$ is the direct sum of the forms $L_K$ on $E_K \otimes \calH_K$.
\end{definition}

%%%%%%%%%%%%%%%%%%%%%%%%%%%%%%%%%
\section{Intersection forms on the orbifold Milnor lattice} \label{sect:inter}
%%%%%%%%%%%%%%%%%%%%%%%%%%%%%%%%%
Here we define bilinear forms on the quantum homology group which are analogues of the intersection forms
on the vanishing homology groups of singularities. The intersection form on the vanishing homology group
of a singularity is either symmetric or skew-symmetric depending on the number of variables. To each singularity
one also associates a symmetric form which is the intersection form of its stabilization with an odd number of
variables. (A tradition of singularity theory is to consider the stabilization with the number of variables
equal to $3\mod 4$.) The symmetric intersection form appears to be a more important invariant of singularities than the non-symmetric one.

For a germ $f:(\CC^n,0)\to(\CC,0)$ the intersection form $S(\cdot,\cdot)$ on the vanishing homology group
$H_{n-1}(V_f,\CC)$ is defined by the Seifert form. Namely, one has
$S=-L+(-1)^nL^T$. This inspires the following definition.

\begin{definition}
The {\em mixed intersection form} on the quantum homology group $\calH_{f,G}$ is defined by
$$
S^{\rm mix}_{f,G}=\bigoplus_{K\in {\rm Iso\,}G}\left(-L_K+(-1)^{n_K}L_K^T\right)\,.
$$
\end{definition}

It is an integer valued bilinear form on $\calH^{\ZZ}_{f,G}$ (symmetric or skew symmetric on the summands in (\ref{eq:qhg})).

Essentially (up to sign) there are two natural symmetric bilinear forms on the quantum homology group $\calH_{f,G}$.
One of them is obtained by the stabilization of each summand to a function of $3\mod 4$ variables. The other one
(more natural from our point of view) is obtained by the stabilizations to $3\mod 4$ variables for $n_K$ odd
and to $1\mod 4$ variables for $n_K$ even respectively (or vice versa).

\begin{definition}
The {\em orbifold intersection form} on the quantum homology group $\calH_{f,G}$ is
$$
S^{\rm orb}_{f,G}=\bigoplus_{K\in {\rm Iso\,}G}(-1)^{\frac{n_K(n_K+1)}{2}}\left(-L_K-L_K^T\right)\,.
$$
The {\em quantum intersection form} on $\calH_{f,G}$ is
$$
S^{\rm qua}_{f,G}=\bigoplus_{K\in {\rm Iso\,}G}(-1)^{\frac{(n_K-2)(n_K+1)}{2}}\left(-L_K-L_K^T\right)\,.
$$
\end{definition}

The reason for these names is the following. The quantum intersection form is ``predominantly negative''.
This means that it is induced by intersection forms of singularities with the self-intersection numbers
of the vanishing cycles equal to $(-2)$. The orbifold intersection form is ``predominantly negative''
on the summands with $n_K$ odd and is ``predominantly positive'' (i.e., induced by intersection forms 
with the self-intersection numbers of the vanishing cycles equal to $(+2)$) on the summands with $n_K$ even.
In the second case the signature of this form is more related to the orbifold Euler characteristic,
whence in the first case the signature is more related to the rank of the quantum homology group.

Summarizing the facts from Section~\ref{sect:Seifert}, we have

\begin{proposition}
The bilinear forms $S^{\rm mix}_{f,G}$, $S^{\rm orb}_{f,G}$ and $S^{\rm qua}_{f,G}$ are integer valued forms on
$\calH_{f,G}^{\ZZ}$. The forms $S^{\rm orb}_{f,G}$ and $S^{\rm qua}_{f,G}$ are symmetric and even.
\end{proposition}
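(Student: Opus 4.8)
The plan is to treat each summand $E_K\otimes\calH_K$ of $\calH_{f,G}=\bigoplus_{K\in{\rm Iso}\,G}E_K\otimes\calH_K$ separately and to reduce all three claims to facts already proved in Section~\ref{sect:Seifert} together with one classical input: the intersection form of an isolated hypersurface singularity in an \emph{odd} number of variables is an even symmetric lattice (in a distinguished basis the vanishing cycles have self-intersection $-2(-1)^{m(m+1)/2}=\pm2$ and all off-diagonal entries are integers). For integrality I would recall that $L_K$ is the restriction to $E_K\otimes\calH_K$ of $\widehat L_K=(-1)^{n_K}\,\widehat\ell_K\otimes L^G_{f^K}$, that $\widehat\ell_K$ is integer valued on $\ZZ^{(p_K)}[K^*]\supseteq E_K^{\ZZ}$ by the Proposition preceding Theorem~\ref{orb-Z}, and that $L^G_{f^K}$ is integer valued on $\calH_K^{\ZZ}$ since it is the restriction of the integral classical Seifert form of $f^K$ to the sublattice $\calH_K^{\ZZ}$; hence $L_K$ is integral on $E_K^{\ZZ}\otimes\calH_K^{\ZZ}$, and each of $S^{\rm mix}_{f,G}$, $S^{\rm orb}_{f,G}$, $S^{\rm qua}_{f,G}$, being a $\pm1$-combination of $L_K$ and $L_K^T$ on each summand, is integral on $\calH_{f,G}^{\ZZ}$. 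Symmetry of $S^{\rm orb}_{f,G}$ and $S^{\rm qua}_{f,G}$ is immediate, since on each summand each is a scalar times the visibly symmetric form $-L_K-L_K^T$, and a direct sum of symmetric forms is symmetric.

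For evenness it suffices, since a direct sum of even forms is even and scaling by $\pm1$ preserves evenness, to show that $-L_K-L_K^T$ restricted to the lattice $E_K^{\ZZ}\otimes\calH_K^{\ZZ}$ is $\pm$ an even form. Here I would use the identification from Section~\ref{sect:Seifert} of $\CC[K^*]\otimes\calH_K$ with a direct sum over $\gamma\in K^*/\langle\alpha_K\rangle$ of the $G$-invariant vanishing homology $H_{n_K}(V_{f^K+u^{p_K}};\CC)^G$ (on the $B_{\gamma,2}\otimes\calH_K$ pieces) and of stabilizations of $\calH_K$ to $n_K+1$ variables (on the $B_{\gamma,1}\otimes\calH_K$ pieces), under which, by~(\ref{eq:Thom_equi}) and the Sebastiani--Thom/stabilization sign rules, $\widehat L_K$ becomes the direct sum of the equivariant Seifert forms of $f^K\oplus u^{p_K}$ and $f^K\oplus v^2$ — isolated singularities in $n_K+1$ variables. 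If $n_K$ is even, then $n_K+1$ is odd, so $-\widehat L_K-\widehat L_K^T$ is, summandwise, the intersection form of such a singularity, which is even, stays even on the $G$-invariant sublattice and on the further sublattice $E_K^{\ZZ}\otimes\calH_K^{\ZZ}$; since its restriction there is $-L_K-L_K^T$, multiplying by the signs $(-1)^{n_K(n_K+1)/2}$ resp.\ $(-1)^{(n_K-2)(n_K+1)/2}$ gives even forms. If $n_K$ is odd, I would add one further square variable: the equivariant Seifert forms of $f^K\oplus u^{p_K}\oplus w^2$ and $f^K\oplus v^2\oplus w^2$ equal $-\widehat L_K$ on the respective summands (stabilization formula), these are isolated singularities in $n_K+2$ (odd) variables, so $\widehat L_K+\widehat L_K^T$ is again a direct sum of even intersection forms; its restriction to $E_K^{\ZZ}\otimes\calH_K^{\ZZ}$ is $L_K+L_K^T=-(-L_K-L_K^T)$, and again $S^{\rm orb}_{f,G}$, $S^{\rm qua}_{f,G}$ are $\pm1$ times an even form. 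This finishes all cases.

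The main obstacle I anticipate is the sign and variable-count bookkeeping in the preceding paragraph: one must verify that the identifications of Section~\ref{sect:Seifert} really carry $\widehat L_K$ to a direct sum of genuine equivariant Seifert forms of Sebastiani--Thom sums with the asserted number of variables, tracking the factor $(-1)^{n_K}$ in the definition of $\widehat L_K$, the factor $(-1)^{mn}$ of~(\ref{eq:Thom_equi}), and the sign acquired at each stabilization, so that one lands on an \emph{odd} number of variables (where the intersection form is symmetric with self-intersections $\pm2$), never on an even one (where it is skew and evenness would be vacuous or false on the symmetric summands). Once the parity is correct, the rest is automatic: evenness of the intersection form of an isolated singularity in an odd number of variables is inherited by every sublattice, in particular by $E_K^{\ZZ}\otimes\calH_K^{\ZZ}$, and is preserved under the $\pm1$ scalings in the definitions of $S^{\rm orb}_{f,G}$ and $S^{\rm qua}_{f,G}$.
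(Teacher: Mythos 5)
Your proposal is correct, and its first half --- integrality of $L_K$ on $E_K^{\ZZ}\otimes\calH_K^{\ZZ}$ from the integrality of $\widehat{\ell}_K$ on $\ZZ^{(p_K)}[K^*]\supseteq E_K^{\ZZ}$ and of the classical equivariant Seifert form on $\calH_K^{\ZZ}$, together with the visible symmetry of $-L_K-L_K^T$ --- is exactly what the paper means by ``summarizing the facts from Section~\ref{sect:Seifert}''; the paper offers no further argument. Where you genuinely diverge is evenness. You establish it by re-identifying $\widehat{L}_K$ on the $B_{\gamma,2}$- and $B_{\gamma,1}$-pieces with the equivariant Seifert forms of $f^K\oplus u^{p_K}$ and $f^K\oplus v^2$ in $n_K+1$ variables, stabilizing once more by $w^2$ when $n_K$ is odd, and then quoting the classical evenness of the intersection form of an isolated singularity in an odd number of variables. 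That bookkeeping does check out (e.g.\ $(-1)^{n_K}\widehat{\ell}_K\otimes L^G_{f^K}$ agrees with~(\ref{eq:Thom_equi}) for a one-variable second factor, and the extra $w^2$ flips the sign so that $\widehat{L}_K+\widehat{L}_K^T$ becomes the intersection form in $n_K+2$ variables), and evenness does pass to the sublattice $E_K^{\ZZ}\otimes\calH_K^{\ZZ}$. But it is a detour: evenness here is a formal consequence of integrality alone. For any bilinear form $L$ that is integer valued on a lattice, the symmetrization $\pm(-L-L^T)$ takes the value $\mp 2L(v,v)\in 2\ZZ$ on every lattice vector $v$, so $S^{\rm orb}_{f,G}$ and $S^{\rm qua}_{f,G}$ are even the moment the first sentence of the proposition is proved; no parity-of-variables analysis is required, and this is surely the one-line argument the paper has in mind. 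What your longer route buys is conceptual rather than logical: it exhibits $S^{\rm orb}_{f,G}$ and $S^{\rm qua}_{f,G}$ summandwise as genuine intersection forms of suspensions, which is what justifies the names and the ``predominantly negative/positive'' discussion following the definition, but it is not needed to prove the stated proposition.
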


\begin{remark}
 It is interesting to understand a relation of the defined bilinear forms with the bilinear pairing
 considered in the FJRW-theory: \cite[page 38]{Ruan_etal}. However, a direct relation is unclear.
 The pairing in the FJRW-theory is well defined only if the group $G$ contains the exponential
 grading operator $J$, whereas the definitions of the pairings introduced here do not require
 additional conditions on the group $G$. The pairing in the FJRW-theory is defined through pairings
 on the summands $\calH_g$ of the quantum (co)homology group \cite[Definition~3.1.1]{Ruan_etal}.
 In fact the pairing on $\calH_g$ is nothing else but the Seifert form of the germ $f^g$ restricted to
 the subspace of the vanishing
 homology group of $f^g$ invariant with respect to $G$. The Seifert form itself is not symmetric,
 however its restriction to the subspace invariant with respect to the classical monodromy operator $J$ is.
\end{remark}

%%%%%%%%%%%%%%%%%%%%%%%%%%%%%%%
\section{Invertible polynomials} \label{sect:inv}
%%%%%%%%%%%%%%%%%%%%%%%%%%%%%%%%%
In this section, we compute the orbifold Milnor lattice for some examples. These examples are chosen in the class of so called invertible polynomials.  An invertible polynomial  is a quasihomogeneous polynomial with the number of monomials equal to the number of variables. We consider an invertible polynomial $f$ and a subgroup $G$ of the group $G_f$ of diagonal symmetries of $f$. A description of properties of invertible polynomials and of their symmetry groups can be found, e.g., in \cite{Krawitz, KS, EGT}. In particular, for a pair $(f,G)$ consisting of a (non-degenerate) invertible polynomial $f$ and a subgroup $G$ of its symmetry group $G_f$, one can consider the (Berglund--H\"ubsch--Henningson) dual pair $(\widetilde{f}, \widetilde{G})$.  It is an interesting problem to compare the orbifold Milnor lattices for dual pairs. We shall examine some examples. 

The first result is that  the quantum cohomology groups of dual pairs have the same rank. This was shown in \cite[Theorem~1.1]{Krawitz} for a pair $(f,G)$, where $G$ is an admissible group, i.e., a group containing the exponential grading operator $J$. Here we give a proof for an arbitrary group $G$. For this purpose we adapt certain results of \cite{EGT}.

%For our purpose we need certain versions of the results of \cite{EGT} and \cite{EG-Edinburgh}. In particular, we shall treat more refined versions of the E-function considered in \cite {EGT}. 

The quantum cohomology group $\calH_{f,G}$ is the direct sum of the subspaces $\calH_{f,G,0}$ and $\calH_{f,G,1}$ where
\[
\calH_{f,G,i} = \bigoplus_{g \in G, \atop n_g \equiv i \, {\rm mod} \, 2} \calH_g \quad \mbox{for } i=0,1.
\]
A ($\QQ \times \QQ$)-grading on these spaces was defined in \cite[Equations~(2.2),(2.3)]{EGT} in terms of the mixed Hodge structures on the vanishing cohomology groups and the ages of elements of $G$.
It is the same one as the bigrading considered in \cite[Remark 3.2.4]{Ruan_etal}.

For the next definition compare \cite[Equation~(2.4)]{EGT}.

\begin{definition}
The {\em E\,$^i$-function} of the pair $(f,G)$, $i=0,1$, is
\begin{equation}
E^i(f,G)(t,\bar{t}):=\displaystyle\sum_{p,q\in\QQ} {\rm dim}_\CC (\calH_{f,G,i})^{p,q} \cdot t^{p-\frac{n}{2}}{\bar{t}}^{q-\frac{n}{2}}.
\end{equation}
\end{definition}

For the E-function considered in \cite{EGT} one has
\[
E(f,G)(t,\bar{t})= E^0(f,G)(t,\bar{t}) - E^1(f,G)(t,\bar{t}).
\]
One has the following relations between the ${\rm E}^i$-functions of a pair $(f,G)$ and the dual pair $(\widetilde{f}, \widetilde{G})$, which are refined versions of \cite[Theorem~9]{EGT}.

\begin{theorem} For $n$ even one has
\[ E^i(f,G)(t,\bar{t}) = E^i(\widetilde{f},\widetilde{G})(t^{-1},\bar{t}) \quad \mbox{for } i=0,1.
\]
For $n$ odd one has
\begin{eqnarray*}
E^0(f,G)(t,\bar{t})  & = &  E^1(\widetilde{f},\widetilde{G})(t^{-1},\bar{t}), \\
E^1(f,G)(t,\bar{t})  & = & E^0(\widetilde{f},\widetilde{G})(t^{-1},\bar{t}).
\end{eqnarray*}
\end{theorem}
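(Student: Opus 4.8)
The plan is to deduce the theorem from the symmetry statement \cite[Theorem~9]{EGT} for the full E-function, by splitting it into the two parities $i=0,1$ and keeping track of which summands $\calH_g$ land in $\calH_{f,G,i}$ under the Berglund--H\"ubsch--Henningson duality. First I would recall from \cite{EGT} the precise form of the duality on the level of summands: the duality sets up a bijection between elements $g\in G$ and elements $\widetilde g\in\widetilde G$ (via the identification of $\widetilde G$ with a kernel of a restriction map on character groups, as in the Remark after Lemma~\ref{lem:Ker}), and under this bijection $\calH_g$ is isomorphic, as a bigraded vector space, to $\calH_{\widetilde g}$ up to the reflection $(p,q)\mapsto (n-p,q)$ coming from Poincar\'e--type duality in the first variable. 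This reflection is exactly what produces the substitution $t\mapsto t^{-1}$ in the E-function (after the shift by $n/2$ in the exponents, $t^{p-n/2}\mapsto t^{(n-p)-n/2}=t^{-(p-n/2)}$), while leaving $\bar t$ untouched.

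The key step is then the bookkeeping of parities. For a fixed $g$, one needs the relation between $n_g$ (the dimension of the fixed locus $(\CC^n)^g$) and $n_{\widetilde g}$ (the dimension of the fixed locus of the dual element acting on $\CC^n$ for $\widetilde f$). The relevant fact, again extractable from \cite{EGT} (it is the combinatorial heart of the Berglund--H\"ubsch construction: a monomial/variable incidence matrix and its transpose), is that $n_g + n_{\widetilde g}$ depends only on $n$ — concretely $n_g+n_{\widetilde g}\equiv n\pmod 2$. Hence for $n$ even, $n_g\equiv n_{\widetilde g}\pmod 2$, so the duality preserves the parity index $i$, giving $E^i(f,G)(t,\bar t)=E^i(\widetilde f,\widetilde G)(t^{-1},\bar t)$; for $n$ odd, $n_g\not\equiv n_{\widetilde g}\pmod 2$, so the duality interchanges $\calH_{f,G,0}\leftrightarrow\calH_{f,G,1}$, yielding the cross relations $E^0(f,G)(t,\bar t)=E^1(\widetilde f,\widetilde G)(t^{-1},\bar t)$ and vice versa. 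Summing the two parities and using $E=E^0-E^1$ recovers \cite[Theorem~9]{EGT} as a consistency check: for $n$ even one gets $E(f,G)(t,\bar t)=E(\widetilde f,\widetilde G)(t^{-1},\bar t)$, and for $n$ odd one gets $E(f,G)(t,\bar t)=-E(\widetilde f,\widetilde G)(t^{-1},\bar t)$, matching the sign $(-1)^n$ there.

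I would carry this out in the order: (i) state the summand-wise duality isomorphism with its bigrading shift, citing \cite{EGT}; (ii) prove the parity relation $n_g+n_{\widetilde g}\equiv n\pmod 2$ from the structure of the exponent matrix and its transpose; (iii) assemble the two cases. The main obstacle I expect is step (ii) together with making step (i) genuinely summand-wise rather than only in the alternating sum: \cite[Theorem~9]{EGT} is presumably proved at the level of the combined generating function $E=E^0-E^1$, and one has to check that the proof there in fact establishes the stronger graded isomorphism $\calH_{f,G,i}\cong\calH_{\widetilde f,\widetilde G,i'}$ for the appropriate $i'$ — i.e. that no cancellation between the two parities was used. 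If the cited proof does pass through such a summand-wise comparison (which is typical for these mirror-symmetry arguments, where one matches $(g,\widetilde g)$ pairs one at a time), then the refinement is essentially immediate; the only real content beyond \cite{EGT} is the explicit parity bookkeeping, which I would isolate as a short lemma on $n_g$.
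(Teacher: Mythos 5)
There is a genuine gap at the very first step: your argument is built on a bijection between the elements $g\in G$ and the elements $\widetilde g\in\widetilde G$ under which $\calH_g$ would be isomorphic (as a bigraded space, up to the reflection $(p,q)\mapsto(n-p,q)$) to $\calH_{\widetilde g}$. No such bijection exists in general, because $G$ and $\widetilde G$ usually have different orders: $\widetilde G$ is the kernel of the restriction map $G_f^{*}\to G^{*}$, so $|\widetilde G|=|G_f|/|G|$. (In Example~\ref{ExA}, $|G|=7$ while $|\widetilde G|=2$.) Consequently the summand-wise comparison $\calH_g\cong\calH_{\widetilde g}$, and with it your parity lemma ``$n_g+n_{\widetilde g}\equiv n\pmod 2$ for corresponding elements'', cannot even be formulated. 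The combinatorial structure that \cite{EGT} actually provides is a decomposition indexed by \emph{pairs} $(g,\widetilde g)\in G\times\widetilde G$ with multiplicities $\widehat m_{g,\widetilde g}$ symmetric under the exchange $(g,\widetilde g)\mapsto(\widetilde g,g)$; the contribution of such a pair sits in the sector $g$ (hence in parity $n_g\bmod 2$) on one side of the duality and in the sector $\widetilde g$ (parity $n_{\widetilde g}\bmod 2$) on the other. Your parity bookkeeping would therefore have to be carried out pair by pair, for the pairs with $\widehat m_{g,\widetilde g}\ne 0$ --- which is exactly what is hidden in the sign $(-1)^{n_g}$ of \cite[Equation~(2.9)]{EGT} as opposed to the global sign $(-1)^n$ of \cite[Theorem~9]{EGT}.

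The paper avoids any explicit parity lemma by a different device: it introduces $E'=E^0+E^1$ alongside $E=E^0-E^1$, observes that the computations of \cite[Section~4]{EGT} give for $E'$ the same closed double-sum formula over $G\times\widetilde G$ as for $E$ but without the sign $(-1)^{n_g}$, and reads off from the symmetry $\widehat m_{\widetilde g,g}=\widehat m_{g,\widetilde g}$ that $E'$ satisfies the duality relation without the factor $(-1)^n$. Solving the two resulting linear relations for $E^0$ and $E^1$ yields the theorem ($E^i\leftrightarrow E^i$ for $n$ even, $E^0\leftrightarrow E^1$ for $n$ odd). Your concern in the last paragraph --- that \cite[Theorem~9]{EGT} might only control the alternating sum --- is precisely the right one, but the remedy is not a bijection of group elements; it is the passage to the pair-indexed formula, equivalently to the second generating function $E'$.
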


\begin{proof} Let us define 
\[
E'(f,G)(t,\bar{t}) := E^0(f,G)(t,\bar{t}) + E^1(f,G)(t,\bar{t}).
\]
The arguments used in \cite{EGT} imply that the function $E'(f,G)(t,\bar{t})$ is given by the equations (2.8) and (2.9) without the sign $(-1)^{n_g}$ in the latter one. The computations presented in \cite[Section~4]{EGT} give the following equation for $E'(f,G)(t,\bar{t})$ (cf.\ \cite[Proposition~14]{EGT})
\begin{equation}
E(f,G)(t, \bar{t}) = \sum_{(g, \widetilde{g}) \in G \times \widetilde{G}}   \widehat{m}_{g,\widetilde{g}} (t \bar{t})^{{\rm age}(g)-\frac{n-n_g}{2}} \left( \frac{\bar{t}}{t} \right)^{{\rm age}(\widetilde{g})-\frac{n-n_{\widetilde{g}}}{2}},
\end{equation}
where the numbers $\widehat{m}_{g,\widetilde{g}}$ are defined in \cite{EGT}. Since $\widehat{m}_{\widetilde{g},g}=\widehat{m}_{g,\widetilde{g}}$, this equation implies the statement.
\end{proof}

\begin{corollary}
One has 
\[
\dim \calH_{f,G} = \dim \calH_{\widetilde{f},\widetilde{G}}
\]
$($and therefore ${\rm rk} \, \Lambda_{f,G} = {\rm rk} \, \Lambda_{\widetilde{f},\widetilde{G}}$$)$.
\end{corollary}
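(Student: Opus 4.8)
The plan is to derive the corollary directly from the Theorem by a specialization of the variables $t,\bar t$. Recall that by definition
\[
E^i(f,G)(t,\bar t)=\sum_{p,q\in\QQ}\dim_\CC(\calH_{f,G,i})^{p,q}\cdot t^{p-\frac n2}\bar t^{\,q-\frac n2},
\]
so that setting $t=\bar t=1$ collapses all the monomials and yields
\[
E^i(f,G)(1,1)=\sum_{p,q}\dim_\CC(\calH_{f,G,i})^{p,q}=\dim_\CC\calH_{f,G,i}.
\]
Hence $\dim\calH_{f,G}=\dim\calH_{f,G,0}+\dim\calH_{f,G,1}=E^0(f,G)(1,1)+E^1(f,G)(1,1)$, and likewise for the dual pair.

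Next I would evaluate the identities of the Theorem at $t=\bar t=1$. In the case $n$ even, the substitution $t\mapsto t^{-1}$ is harmless at $t=1$, so for $i=0,1$ we get $E^i(f,G)(1,1)=E^i(\widetilde f,\widetilde G)(1,1)$; summing over $i$ gives $\dim\calH_{f,G}=\dim\calH_{\widetilde f,\widetilde G}$. In the case $n$ odd, the Theorem swaps the two pieces: $E^0(f,G)(1,1)=E^1(\widetilde f,\widetilde G)(1,1)$ and $E^1(f,G)(1,1)=E^0(\widetilde f,\widetilde G)(1,1)$; adding these two equations again gives $\dim\calH_{f,G}=\dim\calH_{\widetilde f,\widetilde G}$. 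So in either parity the total dimension is preserved.

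Finally, the parenthetical statement about ranks of lattices follows because, by construction in Section~\ref{sect:Milnor}, $\Lambda_{f,G}$ is a full lattice in the complex vector space $\calH_{f,G}$ (it is a direct sum of tensor products $E_K^\ZZ\otimes\calH_K^\ZZ$, each a full lattice in $E_K\otimes\calH_K$), so ${\rm rk}\,\Lambda_{f,G}=\dim_\CC\calH_{f,G}$, and the equality of dimensions transfers verbatim to an equality of ranks.

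There is essentially no obstacle here: the corollary is a one-line consequence of the Theorem once one observes that the E$^i$-functions specialize to dimensions at $t=\bar t=1$. The only thing to be slightly careful about is that the substitution $t\mapsto t^{-1}$ in the Theorem is legitimate at the point $t=1$ (the E$^i$-functions are Laurent polynomials, or at worst rational functions regular at $t=1$, so the evaluation makes sense), and that in the odd case one must add \emph{both} displayed identities rather than using just one of them.
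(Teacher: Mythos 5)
Your proof is correct and is exactly the argument the paper leaves implicit: the Corollary is stated without proof because it follows from the Theorem by evaluating the $E^i$-functions at $t=\bar t=1$ (which returns $\dim_\CC\calH_{f,G,i}$) and summing over $i$, with the index swap for $n$ odd handled precisely as you do. The remark that $\Lambda_{f,G}$ is a full lattice in $\calH_{f,G}$, so that ${\rm rk}\,\Lambda_{f,G}=\dim_\CC\calH_{f,G}$, is likewise the paper's intended justification of the parenthetical claim.
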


\begin{example} \label{Ex0}
We consider the invertible polynomial $f(x,y)=x^2y+y^5$ with its maximal group of symmetries $G=G_f$. Here $G_f$ is the group generated by the exponential grading operator $(\exp (2 \pi i)2/5), \exp((2 \pi i)1/5))$ and $(-1,0)$. The orbifold Milnor lattice is the direct sum
\[ \Lambda_{f,G} = \left( \bigoplus_{K \in {\rm Iso}\, G \atop K \neq \{ {\rm id} \}} \bigoplus_{g\in {\stackrel{\circ}{K}}} \calH_K^\ZZ \right) \oplus H_1(V_f; \ZZ)^G.
\]
We first show that $H_1(V_f; \ZZ)^G=0$. For this we consider a suitable real morsification of the function $f$ and the distinguished basis of vanishing cycles obtained by the method of N.~A'Campo and the second author from it (see, e.g., \cite[Section~4.1]{AGV2}). The corresponding Coxeter-Dynkin diagram is shown in Fig.~\ref{Fig0}. 
\begin{figure}
$$
\xymatrix{ *{\bullet}  \ar@{-}[dr]  \ar@{}^{2}[r] & & & &\\
& *{\oplus}  \ar@{-}[r]   \ar@{}^{5}[d] & *{\bullet} \ar@{-}[r]   \ar@{}^{3}[d] & *{\oplus} \ar@{-}[r] \ar@{}^{6}[d] & *{\bullet} \ar@{}^{4}[d]  \\
  *{\bullet} \ar@{-}[ur] \ar@{}_{1}[r]  & & & & 
  } 
$$
\caption{Coxeter-Dynkin diagram of $f(x,y)=x^2y+y^5$} \label{Fig0}
\end{figure}
One can easily see that the elements of $H_1(V_f;\ZZ)$ which are invariant under the monodromy operator $J$ are linear combinations of the vanishing cycles corresponding to the saddle points (indicated by $\bullet$) such that the sum of the coefficients along the boundary of a region (indicated by $\oplus$ or $\ominus$) to which they are connected is equal to zero. Here these elements are generated by the basic elements
$1-2$ and  $1-3+4$.
None of them is invariant under the transformation $(x,y) \mapsto (-x,y)$ which corresponds to the reflexion at the horizontal axis. 
By our definition, the remaining part of the orbifold Milnor lattice $\Lambda_{f,G_f}$ with the orbifold intersection form is isomorphic to $A_1 \oplus A_4 \oplus A_4$. On the other hand, the dual polynomial $\widetilde{f}(x,y)=x^2+xy^5$ defines an $A_9$-singularity with the dual group $\widetilde{G}_f= \{ {\rm id}\}$.
\end{example}

We now examine two examples of Krawitz \cite[3.2]{Krawitz}.

\begin{example} \label{ExA}
We consider the invertible polynomial $f(x,y)=x^3y+xy^5$ of loop type (see \cite{KS}) with the group $G$ generated by the exponential grading operator $J=(\exp (2 \pi i)2/7), \exp((2 \pi i)1/7))$. The polynomial is self-dual and $\widetilde{G}=\langle (-1,-1) \rangle$. The orbifold Milnor lattice with respect to $G$ has a natural splitting
\[ \Lambda_{f,G} = {{\stackrel{\circ}{\Lambda}}} \oplus H_1(V_f; \ZZ)^G \mbox{ where } {{\stackrel{\circ}{\Lambda}}} =\bigoplus_{g \in G\setminus \{ {\rm id} \} } \calH_g^\ZZ.
\]
The lattice ${{\stackrel{\circ}{\Lambda}}}$ is isomorphic to $A_6$. In order to compute the invariant part $H_1(V_f; \ZZ)^G$ of the usual Milnor lattice of $f$, we proceed as in Example~\ref{Ex0}. A Coxeter-Dynkin diagram with respect to a suitable real morsification of the function $f$ is given by Fig.~\ref{FigA}.
\begin{figure}
$$
\xymatrix{ 
  *{\bullet} \ar@{-}[r] \ar@{}_{5}[d]  &  *{\ominus} \ar@{-}[r] \ar@{-}[d]  \ar@{--}[dr] \ar@{}_{1}[d] & *{\bullet} \ar@{-}[d] \ar@{-}[r] \ar@{}^{7}[d] & *{\ominus} \ar@{-}[r]  \ar@{-}[d] \ar@{--}[dl] \ar@{--}[dr] \ar@{}^{2}[d]  &*{\bullet} \ar@{-}[d] \ar@{}^{10}[d]  & & \\
 & *{\bullet} \ar@{-}[r] \ar@{}_{6}[d] &   *{\oplus} \ar@{-}[r] \ar@{-}[r] \ar@{-}[d] \ar@{--}[dr] \ar@{}_{14}[d]  & *{\bullet} \ar@{-}[r] \ar@{-}[d] \ar@{}_{9}[d]  & *{\oplus} \ar@{-}[r] \ar@{-}[d] \ar@{--}[dl]\ar@{--}[dr] \ar@{}_{15}[d] & *{\bullet} \ar@{-}[d] \ar@{}_{12}[d] &  \\
  &  & *{\bullet} \ar@{-}[r] \ar@{}_{8}[d] & *{\ominus} \ar@{-}[r] \ar@{}_{3}[d] & *{\bullet} \ar@{-}[r] \ar@{}_{11}[d]  & *{\ominus} \ar@{-}[r] \ar@{}_{4}[d] & *{\bullet} \ar@{}_{13}[d] \\
  & & & & & & } 
$$
\caption{Coxeter-Dynkin diagram of $f(x,y)=x^3y+xy^5$} \label{FigA}
\end{figure}
A basis of the subspace of invariant cycles is given by the elements
\[ 5-6+8-11+12, 5-7+9-11+13, 6-7+10-12+13 \]
The matrix for the Seifert form $L^G$ with respect to this basis is given by
\[ \left( \begin{array}{ccc} -5 & -2 & 2 \\
                                -2 & -5 & -2\\
                                2 & -2 & -5
\end{array} \right)
\]
It has determinant $-49$.

Now we consider the dual group $\widetilde{G}$. The orbifold Milnor lattice is
\[ \Lambda_{\widetilde{f},\widetilde{G}}= A_1 \oplus H_1(V_{\widetilde{f}}; \ZZ)^{\widetilde{G}}.
\]
The group $\widetilde{G}$ acts on the diagram of Fig.~\ref{FigA} by reflection at the central vertex 9. A basis of the subspace of $\widetilde{G}$-invariant cycles is given by
\[ 1+4, 2+3, 5+13, 6+12, 7+11, 8+10, 9, 14+15.
\]
The matrix of the Seifert form $L^{\widetilde{G}}$ with respect to this basis is given by
\[ \left( \begin{array}{cccccccc} -2 & 0 & -2 & -2 & -2 & 0 & 0 & 2\\
                                                   0 & -2 & 0 & 0 & -2 & -2 & -2 & 2\\
                                                   0 & 0 & -2 & 0 & 0 & 0 & 0 & 0\\
                                                   0 & 0 & 0 & -2 & 0 & 0 & 0 & -2\\
                                                   0 & 0 & 0 & 0 & -2 & 0 & 0 & -2\\
                                                   0 & 0 & 0 & 0 & 0 & -2 & 0 & -2\\
                                                   0 & 0 & 0 & 0 & 0 & 0 & -1 & -2\\
                                                   0 & 0 & 0 & 0 & 0 & 0 & 0 & -2
\end{array} \right)
\]
It has determinant 128.
\end{example}

\begin{example} \label{ExB}
We consider the invertible polynomial $f(x,y)=x^3y+y^4$ of chain type \cite{KS}, again with the group G generated by the exponential grading operator which is in this case $J=(\exp (2 \pi i)/4), \exp((2 \pi i)/4))$. The orbifold Milnor lattice of the pair $(f,G)$ again has a natural splitting
\[ \Lambda_{f,G} = {{\stackrel{\circ}{\Lambda}}} \oplus H_1(V_f; \ZZ)^G \mbox{ where } {{\stackrel{\circ}{\Lambda}}} =\bigoplus_{g \in G\setminus \{ {\rm id} \} } \calH_g^\ZZ.
\]
The lattice ${{\stackrel{\circ}{\Lambda}}}$ is in this case isomorphic to $A_1 \oplus A_1 \oplus A_1$. In order to compute the invariant part $H_1(V_f; \ZZ)^G$ of the usual Milnor lattice of $f$, we again proceed as in Example~\ref{Ex0}. A Coxeter-Dynkin diagram with respect to a suitable real morsification of the function $f$ is given in Fig.~\ref{FigB}.
\begin{figure}
$$
\xymatrix{ & & *{\bullet}  \ar@{-}[d]  \ar@{}_{2}[d] & & \\
& *{\bullet}  \ar@{-}[r]  \ar@{-}[d] \ar@{}_{3}[d] & *{\ominus} \ar@{-}[r]  \ar@{-}[d] \ar@{--}[dr]  \ar@{--}[dl] \ar@{}_{1}[d] & *{\bullet} \ar@{-}[d] \ar@{}_{4}[d] & \\
  *{\bullet} \ar@{-}[r] \ar@{}_{6}[d]  &  *{\oplus} \ar@{-}[r]  \ar@{}_{8}[d] & *{\bullet}  \ar@{-}[r] \ar@{}_{5}[d] & *{\oplus} \ar@{-}[r]  \ar@{}^{9}[d]  &*{\bullet} \ar@{}^{7}[d] \\
  & & & & } 
$$
\caption{Coxeter-Dynkin diagram of $f(x,y)=x^3y+y^4$} \label{FigB}
\end{figure}
A basis of the subspace of invariant cycles is given by the elements
\[ 2-3+6, 2-4+7, 2-5+6 \]
The matrix of the Seifert form $L^G$ with respect to this basis is given by
\[ \left( \begin{array}{ccc} -3 & -1 & 1 \\
                                -1 & -3 & -1\\
                                1 & -1 & -3
\end{array} \right)
\]
It has determinant $-16$.

Now we consider the dual pair $(\widetilde{f},\widetilde{G})$.
The dual polynomial is $\widetilde{f}(x,y)=x^3+xy^4$ and the dual group is $\widetilde{G}=\langle (\exp (2 \pi i)1/3), \exp((2 \pi i)2/3)) \rangle$. The orbifold Milnor lattice is
\[ \Lambda_{\widetilde{f},\widetilde{G}}= A_2 \oplus H_1(V_{\widetilde{f}}; \ZZ)^{\widetilde{G}}.
\]
In order to compute the Seifert form $L^{\widetilde{G}}$ on $H_1(V_{\widetilde{f}}; \ZZ)^{\widetilde{G}}$, we work with the polynomial $h(x,y)=x^3+y^6$ which is in the same $\mu$-constant equivariant stratum as $\widetilde{f}(x,y)=x^3+xy^4$. A distinguished basis of vanishing cycles for this polynomial can be computed by the method of A.~M.~Gabrielov \cite{Gab}. It is obtained as follows: Let $e_1,e_2$ be a distinguished basis of vanishing cycles for $A_2$ ($x^3$) and $f_1, \ldots , f_5$ a distinguished basis of vanishing cycles for 
$A_5$ ($y^6$). Then 
\begin{equation} \label{eq:Gab}
\gamma_{ij} = e_i \otimes f_j
\end{equation}
is a distinguished basis of vanishing cycles for $h$. We extend these sets by $e_3:=-(e_1+e_2)$ and $f_6:=-(f_1 + \cdots + f_5)$. We extend the definition (\ref{eq:Gab}) to $i=3$ and $j=6$ as well. One has
\[ L(\gamma_{ij}, \gamma_{ij})=-1, L(\gamma_{ij}, \gamma_{i+1,j})=L(\gamma_{ij}, \gamma_{i,j+1})=1, L(\gamma_{ij}, \gamma_{i+1,j+1})=-1
\]
and $L(\gamma_{ij}, \gamma_{i'j'})=0$ otherwise, where $i+1=1$ for $i=3$ and $j+1=1$ for $j=6$. Then one can compute that the following cycles form a basis of the subspace of $\widetilde{G}$-invariant cycles:
\begin{eqnarray*}
b_{22} & = & \gamma_{22} + \gamma_{36}+\gamma_{14}, \\
b_{23} & = & \gamma_{23}+\gamma_{31}+\gamma_{15}, \\
b_{24} & = & \gamma_{24}+\gamma_{32}+\gamma_{16},\\
\delta & = & \gamma_{12}+\gamma_{13}+\gamma_{14}+\gamma_{15}+\gamma_{16}+\gamma_{22}+\gamma_{23}+\gamma_{24}+\gamma_{32}.
\end{eqnarray*}
The matrix of the Seifert form $L^{\widetilde{G}}$ with respect to this basis is given by
\[ \left( \begin{array}{cccc} -3 & 3 & 3 & 3 \\
0 & -3 & 3 & 0\\
0 & 0 & -3 & -3 \\
0 & 0 & 0 & -1
\end{array} \right) .
\]
It has determinant $27$.
\end{example}

%%%%%%%%%%%%%%%%%%%%%%%%%%%%%%%%%%%%%%%%%%%%%%%%%%%

\bigskip
\noindent Leibniz Universit\"{a}t Hannover, Institut f\"{u}r Algebraische Geometrie,\\
Postfach 6009, D-30060 Hannover, Germany \\
E-mail: ebeling@math.uni-hannover.de\\

\medskip
\noindent Moscow State University, Faculty of Mechanics and Mathematics,\\
Moscow, GSP-1, 119991, Russia\\
E-mail: sabir@mccme.ru

\begin{thebibliography}{10}

\bibitem{AGV2} V.~I.~Arnold,  S.~M.~Gusein-Zade,  A.~N.Varchenko:
Singularities of Differentiable Maps, Volume II. Birkh\"auser, Boston--Basel--Berlin, 1988.

\bibitem{BH1} P.~Berglund, T.~H\"ubsch: A generalized construction of
mirror manifolds. Nuclear Physics B {\bf 393} (1993), 377--391. 

\bibitem{BH2} P.~Berglund, M.~Henningson: Landau-Ginzburg orbifolds,
mirror symmetry and the elliptic genus. Nuclear Physics B {\bf 433}
(1995), 311--332.

\bibitem{EG-Edinburgh} W.~Ebeling, S.~M.~Gusein-Zade:
Orbifold zeta functions for dual invertible polynomials.
Proc. Edinb. Math. Soc. {\bf 60} (2017), no.1, 99--106.

\bibitem{EGT} W.~Ebeling, S.~M.~Gusein-Zade, A.~Takahashi:
Orbifold E-functions of dual invertible polynomials. 
Journ. Geom. Phys. {\bf 106} (2016), 184--191.

\bibitem{ET} W.~Ebeling, A.~Takahashi: Mirror symmetry between
orbifold curves and cusp singularities with group action. Int. Math. Res. Not. {\bf 2013} (2013),
2240--2270.

\bibitem{Ruan_etal} H.~Fan, T.~Jarvis, Y.~Ruan: The Witten equation,
mirror symmetry, and quantum singularity theory. Ann. of Math. (2) {\bf 178} (2013), no.1, 1--106.

\bibitem{Gab} A.~M.~Gabrielov:
Intersection matrices for certain singularities. 
Funkcional. Anal. i Prilozen. {\bf 7} (1973), no. 3, 18--32. (Engl. translation in Functional Anal. Appl. {\bf 7} (1974), 182--193.)

\bibitem{Ito-Reid}  Y.~Ito, M.~Reid:
The McKay correspondence for finite subgroups of ${\rm SL}(3,\CC)$.
In: Higher-dimensional complex varieties (Trento, 1994), 221--240, de Gruyter, Berlin, 1996. 

\bibitem{Krawitz} M.~Krawitz: FJRW-rings and Landau--Ginzburg mirror symmetry. 
Preprint arXiv: 0906.0796.

\bibitem{KS} M.~Kreuzer, H.~Skarke: On the classification of quasihomogeneous functions.
Commun. Math. Phys. {\bf 150} (1992), 137--147.
 
\bibitem{Zaslow}  E.~Zaslow: Topological orbifold models and quantum cohomology rings.
Comm. Math. Phys. {\bf 156} (1993), no.2, 301--331.

\end{thebibliography}
\end{document}